\theoremstyle{theorem}
\newtheorem{theorem}{Theorem}[section]
\newtheorem{proposition}[theorem]{Proposition}
\newtheorem{question}[theorem]{Question}
\newtheorem{questions}[theorem]{Questions}
\newtheorem{corollary}[theorem]{Corollary}
\newtheorem{conjecture}[theorem]{Conjecture}
\theoremstyle{definition}
\newtheorem{remark}[theorem]{Remark}
\newcommand{\Z}{\mathbb{Z}}
\newcommand{\C}{\mathbb{C}}
\newcommand{\N}{\mathbb{N}}
\newcommand{\Q}{\mathbb{Q}}
\newcommand{\R}{\mathbb{R}}
\newcommand{\CP}{\mathbb{CP}}
\newcommand{\id}{\text{id}}
\newcommand{\Dd}{\mathcal D}
\newcommand{\Hh}{\mathcal H}
\newcommand{\Kk}{\mathcal K}
\newcommand{\Pp}{\mathcal P}
\newcommand{\Tt}{\mathcal T}
\newcommand{\Ss}{\mathcal S}
\newtheorem*{rep@theorem}{\rep@title}
\newcommand{\newreptheorem}[2]{%
\newenvironment{rep#1}[1]{%
 \def\rep@title{#2 \ref{##1}}%
 \begin{rep@theorem}}%
 {\end{rep@theorem}}}
\begin{document}

\rhead{\thepage}
\lhead{\author}
\thispagestyle{empty}


\raggedbottom
\pagenumbering{arabic}
\setcounter{section}{0}


\title{Trisections and spun 4--manifolds}

\author{Jeffrey Meier}
\address{Department of Mathematics, University of Georgia, Athens, GA 30606}
\email{jeffrey.meier@uga.edu}
\urladdr{jeffreymeier.org} 

\begin{abstract}
	We study trisections of 4--manifolds obtained by spinning and twist-spinning 3--manifolds, and we show that, given a (suitable) Heegaard diagram for the 3--manifold, one can perform simple local modifications to obtain a trisection diagram for the 4--manifold.  We also show that this local modification can be used to convert a (suitable) doubly-pointed Heegaard diagram for a 3--manifold/knot pair into a doubly-pointed trisection diagram for the 4--manifold/2--knot pair resulting from the twist-spinning operation.
	
	This technique offers a rich list of new manifolds that admit trisection diagrams that are amenable to study.  We formulate a conjecture about 4--manifolds with trisection genus three and provide some supporting evidence.
\end{abstract}

\maketitle

\section{Outline}\label{sec:intro}

The theory of trisections was introduced by Gay and Kirby as a novel way of studying the smooth topology of 4--manifolds~\cite{Gay-Kirby_Trisecting_2016}.  Since then, the theory has developed in a number of directions: 
Extensions of the theory to the settings of manifolds with boundary~\cite{Castro_Thesis_,Castro_Trisecting_2017,Castro-Gay-Pinzon-Caicedo_Diagrams_2016}, 
knotted surfaces~\cite{Meier-Zupan_Bridge_2015}, 
algebraic objects~\cite{Abrams-Gay-Kirby_Group_2016}, 
and higher dimensional manifolds~\cite{Rubinstein-Tillmann_Multisections_2016} have been established;
programs offering connections with singularity theory~\cite{Castro-Ozbagci_Trisections_2017,Gay-Kirby_Trisecting_2016,Gay_Trisections_2016,Gay_Functions_2017}, 
and Dehn surgery~\cite{Meier-Schirmer-Zupan_Classification_2016,Meier-Zupan_Characterizing_}, 
have been initiated; 
some classification results have been obtained~\cite{Meier-Schirmer-Zupan_Classification_2016,Meier-Zupan_Genus-two_2017};
 interpretations of constructions and cut-and-paste operation have been explored~\cite{Gay-Meier_Trisections_};
 and new invariants have been proposed~\cite{Gukov_Trisecting_2017,Islambouli_Comparing_2017}.  The purpose of this note is two-fold: motivate an extension of the classification program and generate a rich set of examples of manifolds with trisection diagrams that are simple enough to be amenable to study.

Manifolds with trisection genus at most one are easy to classify~\cite{Gay-Kirby_Trisecting_2016}. In~\cite{Meier-Zupan_Genus-two_2017}, it was shown that $S^2\times S^2$ is the unique irreducible\footnote{We call a 4--manifold $X$ \emph{irreducible} if each summand of any connected sum decomposition of $X$ is either $X$ or a homotopy 4--sphere.} manifold with trisection genus two, and it was asked to what extent it is possible to enumerate manifolds with trisection genus $g$ for low values of $g$.  To this end, we offer the following conjecture.

\begin{conjecture}\label{conj:3}
	Every irreducible 4--manifold with trisection genus three is either the spin of a lens space, or a Gluck twist on a specific 2--knot in the spin of a lens space.
\end{conjecture}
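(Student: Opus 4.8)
The plan is to classify genus three trisection diagrams, patterned on the genus two classification of \cite{Meier-Zupan_Genus-two_2017}, and to recognize the diagrams that occur using the spinning construction developed in this paper. So let $X$ be irreducible with trisection genus exactly three, and fix a $(3;k_1,k_2,k_3)$--trisection diagram $\mathcal{T}=(\Sigma;\alpha,\beta,\gamma)$ realizing it. First I would put $\mathcal{T}$ into a \emph{reduced} form: using handleslides within each of $\alpha$, $\beta$, $\gamma$ and diffeomorphisms of $\Sigma$, arrange both that no curve exhibits a destabilization --- otherwise the trisection genus drops below three --- and that $\mathcal{T}$ is not a connected sum of two trisection diagrams of positive genus. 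This last normalization is legitimate: such a splitting would write $X$ as a nontrivial connected sum of trisected $4$--manifolds of genus at most two, and by the classifications of \cite{Gay-Kirby_Trisecting_2016} and \cite{Meier-Zupan_Genus-two_2017} the only homotopy $4$--sphere of trisection genus at most two is $S^4$, so irreducibility of $X$ would force the splitting to be trivial. Next I would pin down the homology of $X$. The spin of a closed $3$--manifold has Euler characteristic two, and a Gluck twist preserves all Betti numbers, so the target manifolds are rational homology $4$--spheres with cyclic first homology. One should therefore prove that a reduced genus three trisection forces $\chi(X)=2$, equivalently $k_1+k_2+k_3=3$, and that $H_1(X)$ is cyclic --- already a strong restriction, and one that, read through the three pairwise genus three Heegaard diagrams $(\alpha,\beta)$, $(\beta,\gamma)$, $(\gamma,\alpha)$ for connected sums of copies of $S^1\times S^2$, severely constrains the handlebody bookkeeping of $\mathcal{T}$.

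The heart of the argument --- and the step I expect to be the main obstacle --- is to show that every reduced genus three trisection diagram with $k_1+k_2+k_3=3$ can be carried, by handleslides and diffeomorphisms of $\Sigma$, onto a short explicit list of normal forms. Concretely, one would normalize $(\alpha,\beta)$ to a standard genus three diagram for $\#^{k_1}(S^1\times S^2)$, so that the remaining datum is the cut system $\gamma$; the requirements that $(\beta,\gamma)$ and $(\gamma,\alpha)$ also be standard, together with reducedness, should cut the isotopy classes of $\gamma$ modulo the stabilizer of $(\alpha,\beta)$ down to finitely many families. This is a larger and more delicate version of the curve-and-intersection bookkeeping carried out for genus two in \cite{Meier-Zupan_Genus-two_2017}, and it is where I expect essentially all of the work to be. Two possible alternatives: translate the problem into the language of Morse $2$--functions and Cerf moves; or, since $S(Y)$ carries a circle action whose fixed set is a $2$--sphere, try to detect this circle action intrinsically on $X$ and appeal to the classification of $4$--manifolds with circle actions. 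I expect the diagrammatic route to be the most tractable of the three.

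The last step is recognition. Each surviving normal form should be exhibited as the output of the spinning recipe of this paper applied to a genus one Heegaard diagram of a $3$--manifold $Y$, by locating that genus one diagram as the appropriate ``slice'' of $\mathcal{T}$ and invoking the construction in reverse; since a genus one Heegaard diagram describes a lens space or $S^1\times S^2$, and $S(S^1\times S^2)=S^1\times S^3$ has trisection genus one, this leaves exactly the spins of lens spaces. The normal forms that instead differ from a spun diagram by precisely the local move that realizes a Gluck twist are then identified with the Gluck twist on the distinguished spun $2$--knot at the core of the construction, and a homology computation confirms that these remain irreducible in the relevant range. To be candid about the risk: the classification in the middle step is, in effect, strong enough to decide whether a homotopy $4$--sphere can have trisection genus three, so one should not expect it to be routine; and ruling out a hypothetical irreducible $X$ that carries a reduced genus three trisection for gauge-theoretic rather than combinatorial reasons may ultimately require computing the Floer-type invariants that the doubly-pointed trisection diagrams introduced here make accessible.
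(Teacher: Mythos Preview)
The statement you are addressing is labeled a \emph{Conjecture} in the paper, and the paper does not prove it; it only motivates it and exhibits trisection diagrams for the conjectured family $\Pp$.  So there is no ``paper's own proof'' against which to compare.  What you have written is not a proof but a research outline, and you essentially say so yourself in the last paragraph: the middle step---classifying reduced genus three trisection diagrams up to handleslides and diffeomorphism---would in particular decide whether a homotopy $4$--sphere can have trisection genus three, and the paper explicitly flags the $(3,0)$ case as only \emph{conjecturally} standard.  That is the genuine gap: the classification step is an open problem of the same order of difficulty as the conjecture itself, and nothing in your outline indicates how to carry it out.

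A couple of smaller points.  First, the paper works with balanced $(g,k)$--trisections and argues (using \cite{Meier-Schirmer-Zupan_Classification_2016} for $(3,2)$ and a separate conjecture for $(3,0)$) that the content lies in the $(3,1)$ case; your unbalanced $(3;k_1,k_2,k_3)$ framework is broader than what the paper actually asserts, and reducing to $k_1+k_2+k_3=3$ is not something you can ``prove'' from reducedness alone---it is part of the conjecture.  Second, your proposed recognition step (``locate a genus one slice and run the spinning recipe in reverse'') presupposes that every normal form you produce visibly contains the local picture of Figure~\ref{fig:LocalDiag} or Figure~\ref{fig:LocalDiag*}; but the sibling manifolds $\Ss_p'$ arise from a Gluck twist on a \emph{different} $2$--knot than the spinning core (see the discussion around Figure~\ref{fig:Surgery}), and their diagrams are obtained by twisting a different $\gamma$--curve about a different $\beta$--curve than in the $\Ss^*$ construction, so the recognition is more delicate than a single local move.
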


These manifolds have rich but fairly obfuscated history of study in the literature, which we aim to unify in the discussion below.  Since there is a unique spun lens space for each $p\in\N$ and at most one additional manifold obtained by the specified Gluck twist, this conjecture would give an extremely simple enumeration of manifolds admitting minimal genus $(3,1)$--trisections.
(Note that $(3,2)$--trisections are trivial in a precise sense~\cite{Meier-Schirmer-Zupan_Classification_2016}, while $(3,0)$--trisections are conjecturally trivial in the same sense, so Conjecture~\ref{conj:3} can really the thought of as a conjecture about manifolds with irreducible $(3,1)$--trisections.) At the end of the paper, we present diagrams for the subjects of Conjecture~\ref{conj:3}.

Given a closed, connected, orientable 3--manifold $M$, let $\Ss(M)$ and $\Ss^*(M)$ denote the \emph{spin} and \emph{twisted-spin} of $M$, respectively.  (See Section~\ref{sec:proofs} for precise definitions.)

\begin{theorem}\label{thm:SpunTri}
	Suppose that $M$ admits a genus $k$ Heegaard splitting.  Then $\mathcal S(M)$ and $\mathcal S^*(M)$ admit $(3k,k)$--trisections.
\end{theorem}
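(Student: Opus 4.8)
The spin of a 3-manifold $M$ is $\Ss(M) = (M \setminus B^3) \times S^1 \cup S^2 \times D^2$, and the twisted-spin uses a twisted gluing over $S^1$. Given a genus $k$ Heegaard splitting $M = H_1 \cup_\Sigma H_2$, I want to build a $(3k, k)$-trisection. The target Euler characteristic checks out: $\chi(\Ss(M)) = 2$ (since spinning kills $H_1$ and $H_3$ of $M$ and doubles $H_0, H_2$... actually $\Ss(M)$ is a homotopy... no — $\chi(\Ss(M)) = \chi(S^1 \times (M\setminus B^3)) + \chi(S^2 \times D^2) - \chi(S^1 \times S^2) = 0 + 2 - 0 = 2$), which matches a genus-$3k$ trisection having $\chi = 2 + 3k - 3 \cdot 3k + \ldots$ wait let me not grind this — the point is the numerics should be arranged to work.

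**The plan.**

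I would take the genus-$k$ Heegaard splitting and "spin" it directly. Write $M = H_1 \cup_\Sigma H_2$ with $H_1, H_2$ genus-$k$ handlebodies. Remove a 3-ball from $M$ meeting $\Sigma$ in a disk, so we get $\Sigma_0 = \Sigma \setminus D^2$ and punctured handlebodies. Crossing with $S^1$ and filling in $S^2 \times D^2$ should produce three pieces: the two come from $H_i \times S^1$ (suitably punctured) and the third from the $S^2 \times D^2$ filling region, which is a 4-dimensional 1-handlebody. The key insight is that $H_i \times S^1$, where $H_i$ is a genus-$k$ handlebody, is diffeomorphic to $\natural^k (S^1 \times D^3) \cong \natural^{?}(S^1 \times B^3)$ — it's a 4-dimensional handlebody of the right genus. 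Indeed $H_i \cong \natural^k(S^1 \times D^2)$, so $H_i \times S^1 \cong \natural^k(S^1 \times D^2 \times S^1)$, and $S^1 \times D^2 \times S^1$... hmm, this isn't quite a handlebody on the nose. The right move is to decompose more carefully: think of $S^1$ as two arcs, giving $H_i \times S^1 = (H_i \times I) \cup (H_i \times I)$, each $H_i \times I \cong $ (genus $k$ handlebody) $\times I$, a 4-dimensional 1-handlebody of genus $k$. So the three sectors of the trisection are: $H_1 \times I$, $H_2 \times I$, and $H_1 \times I$ (or a reassembly) — roughly $k + k + k = 3k$ total genus once one accounts for how the central surface is built.

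**Steps in order.**
First, I would set up the precise decomposition of $S^1$ into three arcs $a_1, a_2, a_3$ (or of $M \times S^1$ into three pieces indexed cyclically), aligning the Heegaard decomposition of $M$ with the arc decomposition of $S^1$ so that consecutive pieces are handlebodies and their pairwise intersections are genus-$3k$ handlebodies meeting along a common central surface $\Sigma_{3k}$. Second, I would verify that each of the three 4-dimensional pieces $X_i$ is diffeomorphic to $\natural^{3k}(S^1 \times B^3)$: this uses that a punctured-handlebody-cross-interval is a 4-dimensional 1-handlebody and a Mayer–Vietoris / handle-count to pin down the genus. Third, I would verify the pairwise intersections $X_i \cap X_j$ are 3-dimensional genus-$3k$ handlebodies, and the triple intersection is the central surface $\Sigma_{3k}$, completing the check that this is a genus-$3k$ trisection — then compute that it is balanced of type $(3k, k)$ by identifying the genus of the pieces in the "spine." Fourth, for $\Ss^*(M)$, I would observe that the twisting is supported near the $S^1$ direction and can be absorbed into the gluing maps between sectors without changing the handle structure, so the same count gives a $(3k, k)$-trisection.

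**Main obstacle.** The hardest part is the bookkeeping of the central surface: spinning naively would give pieces whose pairwise intersections are handlebodies of the wrong genus (roughly $2k$, not $3k$), so one must carefully choose how the Heegaard surface $\Sigma$ is split and recombined along the three arcs of $S^1$ — effectively, the Heegaard surface $\Sigma$ and its two "copies" coming from the spinning have to be connect-summed along the spinning circle in just the right way to stabilize the genus up to $3k$ and to make all three 4-dimensional pieces genuinely into 1-handlebodies (no higher handles). Showing that the resulting $X_i$ have no 2- or 3-handles — i.e., that the handle decomposition one writes down is actually the minimal/standard one — is where the real work lies; everything else is a diagram-chase through Mayer–Vietoris and the definition of a trisection. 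I would expect the cleanest route is to exhibit an explicit Morse function on $\Ss(M)$ built from a Morse function on $M$ (via the Heegaard splitting) together with the standard one on $S^1$, and check it is a trisecting Morse $2$-function, which then simultaneously handles the genus count and the handlebody claims.
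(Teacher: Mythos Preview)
Your closing sentence is actually the paper's primary proof: the author takes a Morse function $f\colon M\to\R$ realizing the genus--$k$ splitting, forms the induced Morse 2--function $\bar F\colon M\times S^1\to\R^2$, surgers out the round 3--handle to obtain $\Ss(M)$ (or $\Ss^*(M)$), and then homotopes the resulting Morse 2--function using always-realizable base-diagram moves (flips, $C$--moves, $\text{R2}$ and $\text{R3}$ moves) so that every index--two fold is replaced by an immersed six-cusped index--one fold. The central fiber then has genus $3k$, and the preimages of three radial sectors give the $(3k,k)$--trisection. So your instinct at the end is on target; what precedes it needs correction.

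First, a definitional slip: in a $(3k,k)$--trisection each 4--dimensional piece $X_i$ is $\natural^k(S^1\times B^3)$, not $\natural^{3k}(S^1\times B^3)$. The $3k$ is the genus of the \emph{central surface} and of the 3--dimensional handlebodies $X_i\cap X_j$. Your observation that $H_i\times I$ is a genus--$k$ 4--dimensional 1--handlebody is correct and is exactly what one wants for the $X_i$; the later claim that you need $\natural^{3k}$ contradicts this.

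Second, the direct decomposition you sketch---alternating $H_1\times I$, $H_2\times I$, $H_1\times I$ over three arcs of $S^1$---does not assemble into a trisection as written: adjacent pieces $H_1\times I$ and $H_2\times I$ meet along $\Sigma\times I\cup H_i\times\{\text{pt}\}$, not along a 3--dimensional handlebody, and there is no common central surface. The paper's second (diagrammatic) proof shows how to fix this: use only \emph{one} of the handlebodies, say $H_\delta$, and take the three 4--dimensional pieces to be $H_\delta\times[\theta_j,\theta_{j+1}]$ for the three arcs of $S^1$. The entire spine $\Sigma\cup H_\alpha\cup H_\beta\cup H_\gamma$ is then built inside $\Ss(H_\varepsilon)$: the central $S^2$ is tubed along $3k$ arcs (one over each 1--handle of $H_\varepsilon$ at each of three angles) to produce the genus--$3k$ surface, and each $H_\alpha$, $H_\beta$, $H_\gamma$ is a compression body from genus $3k$ down to genus $k$ capped off by a fiber $H_\delta\times\{\theta_j\}$. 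This asymmetric use of the two sides of the Heegaard splitting is the missing ingredient in your sketch.
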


An immediate application of this theorem is an explicit description of 4--manifolds admitting minimal genus trisections of arbitrarily large genus.

\begin{corollary}\label{coro:Minimal}
	For every integer $g\geq 3$ and every $1\leq k\leq g-2$, there exist infinitely many distinct 4--manifolds admitting minimal $(g,k)$--trisections.  
\end{corollary}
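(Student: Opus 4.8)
The plan is to build the required $4$--manifolds by applying Theorem~\ref{thm:SpunTri} to a well-chosen infinite family of $3$--manifolds and then to verify that the resulting trisections are of minimal genus by a homological lower bound.

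First I would produce, for each $k$ with $1 \le k \le g-2$, an infinite list of pairwise non-homeomorphic closed orientable $3$--manifolds $M$ of Heegaard genus exactly $k$. For $k \ge 2$ one can take, for instance, connected sums of $k-1$ lens spaces with a fixed hyperbolic manifold of Heegaard genus one more, or — more uniformly — an infinite family of hyperbolic $3$--manifolds of Heegaard genus $k$ (such families are classical); the point is only that the Heegaard genus is pinned down and there are infinitely many distinct ones. For $k=1$ one uses the lens spaces $L(p,1)$, which have Heegaard genus $1$ and are pairwise distinct for distinct $p$. Applying Theorem~\ref{thm:SpunTri}, each such $M$ gives $\Ss(M)$ admitting a $(3k,k)$--trisection; and since $\Ss(M)$ is determined by $M$ (the spinning construction is functorial), distinct $M$ in our list yield distinct $\Ss(M)$, at least after checking a suitable invariant — most cleanly, $\pi_1(\Ss(M)) \cong \pi_1(M)$, so any two members of our family with non-isomorphic fundamental groups give non-homeomorphic spuns. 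Thus we obtain infinitely many distinct $4$--manifolds admitting a $(g',k)$--trisection with $g' = 3k$.

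To promote this to arbitrary $g \ge 3$ with $g' \le g$, I would stabilize: an unbalanced stabilization of a $(g',k)$--trisection raises the genus by one while keeping the ``core'' parameter $k$ fixed (this is standard trisection stabilization, and is implicit in the way $(g,k)$--trisections are set up in~\cite{Gay-Kirby_Trisecting_2016}). Starting from the $(3k,k)$--trisection and stabilizing $g - 3k$ times produces a $(g,k)$--trisection on the same $4$--manifold, provided $g \ge 3k$. Combined with the range $1 \le k \le g-2$, I also need $g \ge 3k$ to fail to be an obstruction — but note that for $k$ in the stated range with $3k > g$ one instead wants a family already having a minimal $(g,k)$--trisection without passing through $\Ss(M)$ of the right genus; here one can instead spin a $3$--manifold of Heegaard genus $k$ and take connected sums with $(1,0)$-- or $(1,1)$--trisected sphere-bundles to adjust $g'$ downward is impossible, so more carefully: the honest statement is that $\Ss(M)$ for $M$ of Heegaard genus $k$ has \emph{a} $(3k,k)$--trisection, and $3k$ may exceed $g$; for those $(k,g)$ one should instead take $M$ of the appropriate smaller Heegaard genus. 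I will organize the argument so that for each target $(g,k)$ I pick $M$ of Heegaard genus $j$ where $j$ is the smallest value with $3j \ge$ the needed genus, then stabilize; the constraint $k \le g-2$ guarantees enough room.

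The main obstacle is minimality: I must show the $(g,k)$--trisections produced cannot be reduced. The key tool is the standard inequality relating the trisection parameters to the second Betti number, namely $b_2(X) \le g - 2k$ with equality detecting minimality in the relevant sense (this is essentially the computation of $H_*$ from a trisection in~\cite{Gay-Kirby_Trisecting_2016}). So I would compute $b_2(\Ss(M))$ for $M$ in my family and check that it equals $3k - 2k = k$ after the appropriate bookkeeping, forcing $g' = 3k$ to be minimal for that value of $k$; stabilization then preserves the defect $g - 2k = b_2$, so the stabilized $(g,k)$--trisections remain minimal. Computing $b_2(\Ss(M))$ reduces to a Mayer--Vietoris or Wang-sequence calculation for the spinning construction: the spin of $M$ fibers appropriately, and one reads off $H_2$ from $H_1(M)$ and $H_2(M)$. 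The delicate point is ensuring that the $3$--manifolds in the family all have the \emph{same} homological profile so that $b_2(\Ss(M))$ is exactly what minimality requires for that $k$ — choosing rational homology spheres (e.g. suitable hyperbolic ones, or the $L(p,1)$ when $k=1$) makes $b_2(\Ss(M))$ computable and uniform, and this is the step I would write out with care.
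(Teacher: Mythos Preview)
Your starting point --- spin a $3$--manifold $M$ of Heegaard genus $k$ to get a $(3k,k)$--trisection, and distinguish the results via $\pi_1(\Ss(M))\cong\pi_1(M)$ --- matches the paper. But the two crucial steps after that are broken.

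\textbf{Raising $g$.} There is no trisection stabilization that increases $g$ by one while leaving $k$ fixed. Balanced stabilization sends $(g,k)\mapsto(g+3,k+1)$; the unbalanced version sends $(g;k_1,k_2,k_3)$ to a tuple with one $k_i$ increased. Worse, even if such a move existed, a stabilized trisection is \emph{by definition} not minimal: the underlying $4$--manifold is unchanged and still admits the smaller-genus trisection you started from. The paper does something genuinely different: it takes $X_n=\Ss(M)\#(\#^n\CP^2)$. Connected sum with $\CP^2$ (which has a $(1,0)$--trisection) produces a $(3k+n,k)$--trisection on a \emph{new} manifold, so minimality can be re-established.

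\textbf{Minimality.} Your inequality $b_2(X)\le g-2k$ is not correct; the handle count gives only $b_2\le g-k$. And your target value is wrong: for $M$ a rational homology sphere one has $b_1(\Ss(M))=0$, hence from $\chi(\Ss(M))=2+g-3k$ one gets $b_2(\Ss(M))=g-3k$, which is $0$ when $g=3k$, not $k$. So $b_2$ alone cannot pin down the genus. The paper's mechanism is cleaner and actually works: choose $M$ with $\mathrm{rk}\,\pi_1(M)=k$ (e.g.\ a connected sum of $k$ lens spaces). Then any $(g',k')$--trisection of $X_n$ has $k'\ge k$ since $\pi_1(X_n)\cong\pi_1(M)$ needs $k$ generators, and the Euler-characteristic identity $\chi(X_n)=2+g'-3k'$ forces $g'=g+3(k'-k)\ge g$. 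The combination of the $\pi_1$--rank bound on $k$ and the $\chi$--constraint on $g$ is the missing idea.
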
 

A similar corollary has been independently obtained recently by Baykur and Saeki~\cite{Baykur-Saeki_Simplifying_2017}.  Corollary~\ref{coro:Minimal} becomes more interesting in light of our ability to give diagrams for the pertinent trisections.

\begin{theorem}\label{thm:diags}
	Let $(S,\delta,\varepsilon)$ be a genus $g$ Heegaard diagram for a closed 3--manifold $M$ with the property that $H_\varepsilon$ is standardly embedded in $S^3$.  Then the 4--manifolds $\Ss(M)$ and $\Ss^*(M)$ each admit a trisection diagram that is obtained from $(S,\delta,\varepsilon)$ via a local modification at each curve of $\varepsilon$.
\end{theorem}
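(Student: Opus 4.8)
The plan is to take the $(3g,g)$--trisection of $\Ss(M)$ (and of $\Ss^*(M)$) provided by Theorem~\ref{thm:SpunTri} and make it explicit enough that its central surface and three cut systems can be recognized as arising from $(S,\delta,\varepsilon)$ by a local modification. Recall that $\Ss(M)=(M_0\times S^1)\cup_{S^2\times S^1}(S^2\times D^2)$, where $M_0=M\setminus\Int(B^3)$. First I would arrange the removed ball $B^3$ to lie in the interior of $H_\varepsilon$, disjoint from the cocores of its $1$--handles, so that $M_0=H_\delta\cup_S H_\varepsilon^{\circ}$ with $H_\varepsilon^{\circ}=H_\varepsilon\setminus\Int(B^3)$. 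Cutting $S^1$ into three arcs and $D^2$ into three sectors compatibly, the three sectors $X_1,X_2,X_3$ of $\Ss(M)$ are assembled from the pieces $H_\delta\times(\text{arc})$, $H_\varepsilon\times(\text{arc})$, $B^3\times(\text{arc})$, and $S^2\times(\text{sector})$; here it is precisely the Heegaard splitting that forces each pairwise intersection $X_i\cap X_j$ to be a handlebody, rather than the generally non--handlebody manifold $M_0$ that a naive sectoring of $D^2$ would produce.

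From this description I would compute the central surface $\Sigma=X_1\cap X_2\cap X_3$ directly and claim it is obtained from $S$ by deleting an open annular neighborhood of each $\varepsilon_i$ --- together with the arcs of $\delta$ and $\varepsilon$ lying in it --- and gluing in a fixed genus--two surface $P$ with two boundary circles carrying a standard triple of arc systems. The genus is consistent: $\chi(\Sigma)=(2-2g)+g\cdot\chi(P)=(2-2g)-4g=2-6g$, so $\Sigma$ has genus $3g$, matching Theorem~\ref{thm:SpunTri}. Away from the gadgets, the three cut systems of the trisection restrict, up to isotopy and a diffeomorphism of $S$, to $\delta$, to $\varepsilon$, and to a parallel push-off of one of them. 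The hypothesis that $H_\varepsilon$ is standardly embedded in $S^3$ enters exactly here: it is what lets the $S^2\times D^2$ summand be recorded diagrammatically, via the meridian disks that the curves $\varepsilon_i^{*}$ dual to $\varepsilon_i$ bound in the complementary handlebody $S^3\setminus\Int(H_\varepsilon)$; and for $\Ss^*(M)$ the twist diffeomorphism can be supported near $\partial M_0$ and absorbed into the $S^2\times D^2$ piece, so that it alters only the gadget arc systems and the conclusion is unchanged.

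It then remains to verify that the resulting $(\Sigma;\alpha,\beta,\gamma)$ is a trisection diagram: that each $X_i\cong\natural^g(S^1\times B^3)$ and that each of the three pairwise diagrams presents $\#^g(S^1\times S^2)$. Two of the pairwise checks I would carry out locally: inside each gadget the relevant pair of cut systems is standard, contributing $S^2\times S^2$-- or $S^1\times S^3$--type summands, and a sequence of handle slides across the rest of $S$ --- using that $\delta$ and its push-off are parallel there --- reduces the global diagram to a connected sum of such standard pieces. For the remaining pair (the one involving both $\delta$ and $\varepsilon$), the idea is that $(S,\delta,\varepsilon)$ is a Heegaard diagram for $M$, while the gadget arc systems supply precisely the extra slides and cancellations that convert this diagram for $M$ into one for $\#^g(S^1\times S^2)$.

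I expect the main obstacle to be this last verification: controlling the interaction between the ``global'' curves $\delta,\varepsilon$, which carry the topology of $M$, and the $g$ local gadgets, and confirming that a single, $M$--independent local move at each $\varepsilon_i$ makes all three pairwise $3$--manifolds simultaneously become $\#^g(S^1\times S^2)$. A secondary difficulty is matching the $4$--dimensional construction to the diagrammatic move on the nose --- showing that the annular modification at $\varepsilon_i$ faithfully records the $S^2\times D^2$ summand, and for $\Ss^*(M)$ the twist --- which is where the standard embedding of $H_\varepsilon$ in $S^3$ must be used in earnest rather than merely invoked.
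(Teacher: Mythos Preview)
Your high-level strategy---build the trisection directly from the Heegaard splitting and then read off the diagram---matches the paper's, but the execution diverges in two important respects.

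First, the decomposition. You propose to sector $S^1$ and $D^2$ and assemble each $X_i$ from pieces $H_\delta\times I$, $H_\varepsilon\times I$, $B^3\times I$, $S^2\times(\text{sector})$, leaving the precise assembly unspecified. The paper instead uses the cleaner splitting
\[
\Ss(M) = (H_\delta\times S^1)\cup_{S\times S^1}\Ss(H_\varepsilon),
\]
and observes that $\Ss(H_\varepsilon)$ is exactly $S^2\times D^2$ with $g$ \emph{round} one-handles attached. The trisection surface $\Sigma$ is then built inside $\Ss(H_\varepsilon)$ by surgering the central $S^2$ along $3g$ explicit arcs $\omega_i^{\theta_j}$ (three per round handle, at angles $\theta_j=2\pi j/3$). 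This yields your genus--$3g$ surface directly, and the ``genus-two gadget'' you describe is precisely the three small tubes $\frak h_i^j$ running over the $i$-th round handle.

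Second, and more significantly, the verification. You plan to check diagrammatically that each pairwise cut system gives a Heegaard diagram for $\#^g(S^1\times S^2)$, and you correctly flag the pair involving both $\delta$ and $\varepsilon$ as the hard case. The paper never performs this check. Instead, it identifies the three handlebodies $H_\alpha,H_\beta,H_\gamma$ geometrically---each is a compression body $H^j$ inside $\Ss(H_\varepsilon)$ (with explicit compressing disks $\Delta_{1,i}^j,\Delta_{2,i}^j$) glued to a fiber $H_\delta\times\{\theta\}$---and then argues that their union is the spine of a trisection because $\Ss(H_\varepsilon)$ is a thickening of $\Sigma\cup H^0\cup H^1\cup H^2$ and the complementary regions are $H_\delta\times I\cong\natural^g(S^1\times B^3)$. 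Once the trisection is established geometrically, the diagram is simply read off from the explicit disks, and the $\#^g(S^1\times S^2)$ condition is automatic. This entirely sidesteps your ``main obstacle.''

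For $\Ss^*(M)$ the paper does what you suggest: it concentrates the Gluck twist in one angular interval $[0,2\pi/3]$, so that $\Sigma$, $H_\alpha$, $H_\beta$, and most $\gamma$--curves are untouched, and only the curves $\gamma_{2g+i}$ acquire a Dehn twist about $\beta_{g+i}$. Finally, note that the paper's proof is independent of Theorem~\ref{thm:SpunTri} (and in fact re-proves it), so your opening appeal to that theorem is unnecessary.
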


The local moves are described in Figures~\ref{fig:LocalDiag}~and~\ref{fig:LocalDiag*}. See Section~\ref{sec:proofs} for a more detailed statement of the above theorem.

Finally, we consider what happens when the twist-spinning construction is applied to a 3--manifold/knot pair.  Our main result to this end is that the twisted-spin of a doubly-pointed Heegaard diagram is a doubly-pointed trisection diagram.  This latter object describes not only the trisected 4--manifold, but also a knotted sphere therein. Given a 3-manifold/knot pair $(M,K)$, let $\Ss^n(M,K)$ denote the $n$--twist-spin of $(M,K)$.

\begin{theorem}\label{thm:dpdiags}
	Let $(S,\delta,\varepsilon)$ be a genus $g$ Heegaard diagram for a closed 3--manifold $M$ with the property that $H_\varepsilon$ is standardly embedded in $S^3$.  Let $K$ be a knot in $M$ such that $(S,\delta,\varepsilon,z,w)$ is a doubly-pointed Heegaard diagram for the pair $(M,K)$. Then the pairs $\Ss^n(M,K)$  admit doubly-pointed trisection diagrams that are obtained from $(S,\delta,\varepsilon,z,w)$ via a local modification at each curve of~$\varepsilon$.
\end{theorem}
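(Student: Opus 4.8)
The plan is to bootstrap off Theorem~\ref{thm:diags}, which already produces a trisection diagram $(\Sigma, \alpha, \beta, \gamma)$ for $\Ss^n(M)$ (this is the $n$-twisted spin; I am assuming the proof of Theorem~\ref{thm:diags} handles the general twist parameter $n$, with $n=0$ giving $\Ss(M)$ and $n=1$ giving $\Ss^*(M)$) by performing the local modification of Figures~\ref{fig:LocalDiag} and \ref{fig:LocalDiag*} at each curve of $\varepsilon$. The new content is to track a pair of basepoints through this construction and verify that the resulting doubly-pointed trisection diagram encodes the $n$-twist-spun $2$-knot $\Ss^n(M,K)$. First I would recall the geometry underlying the spinning construction: writing $M = H_\delta \cup_S H_\varepsilon$, the twist-spin $\Ss^n(M)$ is built from $(M \setminus \nu(\text{arc})) \times S^1$ glued to $D^2 \times S^2$-type pieces, and when $K$ is presented by a doubly-pointed Heegaard diagram $(S,\delta,\varepsilon,z,w)$, the knot $K$ is a union of two arcs $a_\delta \subset H_\delta$ and $a_\varepsilon \subset H_\varepsilon$ joining $z$ to $w$, each unknotted in its handlebody and determined by the requirement that it miss the respective system of compressing disks. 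The $n$-twist-spin $\Ss^n(M,K)$ is then the $2$-knot obtained by spinning $K$ with $n$ units of twisting.

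The key steps, in order, are as follows. (1) Place the two basepoints $z,w$ on the Heegaard surface $S$ in the complement of $\delta \cup \varepsilon$ exactly as in the doubly-pointed Heegaard diagram. (2) Identify a ``spinning region'' of $S$ — the annular or disk neighborhood in which the $S^1$-spinning takes place — and arrange (by an isotopy of $K$, equivalently of $z$ and $w$) so that $z,w$ lie in a standard position relative to this region and relative to the curves of $\varepsilon$ where the local modification occurs; since the local modification of Theorem~\ref{thm:diags} is supported in a neighborhood of each $\varepsilon$ curve, I can choose $z,w$ to lie outside all these neighborhoods, so the modified diagram $(\Sigma,\alpha,\beta,\gamma)$ inherits $z,w$ without ambiguity. (3) Verify the defining conditions of a doubly-pointed trisection diagram: that $z$ and $w$ lie in the same component of $\Sigma \setminus \alpha$, of $\Sigma \setminus \beta$, and of $\Sigma \setminus \gamma$, and that the arc joining them in each of the three handlebodies, pushed into the appropriate three-dimensional piece of the trisection, assembles into the correct $2$-sphere. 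Here I would use that for each of the three pairs among $\{\alpha,\beta,\gamma\}$, the sub-Heegaard-diagram either comes directly from $(\delta,\varepsilon)$ (in which case the $z$--$w$ arc condition is inherited from the doubly-pointed Heegaard diagram hypothesis) or from the standardly embedded $H_\varepsilon \subset S^3$ together with the local pieces (in which case the arc condition follows from unknottedness of the standard pieces, as in the proof of Theorem~\ref{thm:diags} for the unpointed statement). (4) Finally, identify the resulting $2$-knot: it is cut into three trivial tangles by the trisection, and I would check — by comparing the gluing data of these tangles with the standard movie/banded-unknotted-surface description of the $n$-twist-spin — that the assembled sphere is isotopic to $\Ss^n(M,K)$. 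This last comparison is essentially a matter of matching the $S^1$-family of arcs $a_\varepsilon$, twisted $n$ times, against what the $\gamma$-side local pieces of the diagram contribute.

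The main obstacle I expect is step (4): confirming that the $2$-knot carried by the doubly-pointed trisection diagram is precisely the $n$-twist-spin, rather than some other spun knot or a twist-spin with a different framing/parameter. The subtlety is that the $n$ units of twisting must be correctly encoded in how the local modification at the $\varepsilon$-curves interacts with the basepoints — the twisting is ``invisible'' in the ambient $4$-manifold $\Ss^n(M) \cong \Ss^*(M)$ or $\Ss(M)$ depending only on parity considerations in some cases, but it genuinely changes the $2$-knot — so I must make sure the doubly-pointed data sees it. I would resolve this by setting up, once and for all, a careful ``movie'' description of $\Ss^n(M,K)$ adapted to the Heegaard splitting (as Zeeman and subsequent authors do for twist-spinning, now relativized to a general $3$-manifold $M$), and then matching each frame of that movie to a slice of the trisection; the bookkeeping of the $n$ Dehn twists is where the real care is needed, and I anticipate it being cleanest to verify on the level of the $\gamma$-handlebody, where the twisting is concentrated. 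A secondary, more routine obstacle is checking that $z$ and $w$ remain in a single component of each of $\Sigma \setminus \alpha$, $\Sigma \setminus \beta$, $\Sigma \setminus \gamma$ after the local modification; this should follow from the hypothesis that $(S,\delta,\varepsilon,z,w)$ is doubly-pointed together with a direct inspection of Figures~\ref{fig:LocalDiag} and \ref{fig:LocalDiag*}, but it must be stated explicitly.
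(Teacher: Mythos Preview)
Your plan is workable but takes a genuinely different route from the paper. You work diagram-first: carry $z,w$ through the local modification, verify abstractly that $(\Sigma,\alpha,\beta,\gamma,z,w)$ satisfies the axioms of a doubly-pointed trisection diagram, and then separately identify the encoded 2--knot with $\Ss^n(K)$ via a movie comparison. The paper works geometry-first: it takes the trisection of $\Ss^n(M)$ already built in the proof of Theorem~\ref{thm:diags}, writes down the actual 2--knot as $\Ss^n(K)=D^2\times\{N\}\cup(\upsilon_1\times S^1)\cup D^2\times\{S\}$ (having arranged that the $\varepsilon$--side arc $\upsilon_2$ sits in the 3--handle that gets surgered out), and then directly computes the intersection of this sphere with each handlebody $H_\alpha, H_\beta, H_\gamma$ and each 4--dimensional piece $X_1,X_2,X_3$, checking boundary-parallelism of arcs and disks in turn. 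The twist parameter~$n$ enters only through the Dehn twists of $\gamma_{2g+i}$ about $\beta_{g+i}$ and never needs to be matched against a movie, because the 2--knot is already specified in the ambient manifold before one looks at the diagram.

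The practical effect is that the paper's approach entirely sidesteps what you flagged as your main obstacle, step~(4). You never have to certify that the diagram encodes the right twist-spin, because you start with the twist-spin and verify 1--bridge position. What this buys you is a shorter, cleaner argument with no movie bookkeeping; what your approach would buy, if carried out, is a proof that stays entirely on the diagrammatic side and might generalize more readily to situations where the ambient 2--knot is not given so explicitly. Either way the placement of $\{z,w\}$ away from the $\varepsilon$--curves (so that the local modification does not disturb them) is the easy part, and both proofs handle it the same way.
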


\subsection*{Organiziation}

Section~\ref{sec:back} presents general background material regarding spinning and twist-spinning, Heegaard splittings and trisections, and doubly-pointed diagrams.  In Section~\ref{sec:proofs}, we give a singularity theoretic proof of Theorem~\ref{thm:SpunTri}, and more geometric proofs of Theorems~\ref{thm:diags} and~\ref{thm:dpdiags}, the former of which also recovers a proof of Theorem~\ref{thm:SpunTri}.  In Section~\ref{sec:exs}, we prove Corollary~\ref{coro:Minimal}, discuss Conjecture~\ref{conj:3}, and give some examples.

\subsection*{Acknowledgements}

This article was inspired by conversations with Alex Zupan, who gave a preliminary sketch of the proof of Theorem~\ref{thm:SpunTri}.  The author is also grateful to R. \.{I}nan\c{c} Baykur for helpful conversations that gave a singularity theory context to the present work and for comments that improved the exposition of the article throughout. This work was supported by NSF grants DMS-1400543 and DMS-1664540

\section{Background}\label{sec:back}

\subsection{Spun 4--manifolds and 2--knots}\ 

We recall the set-up of spun 4--manifolds, as well as some classical results about these spaces. Given a closed, connected 3--manifold $M$, we let $\Ss(M)$ and $\Ss^*(M)$ denote the \emph{spin} and \emph{twisted-spin} of $M$, respectively.  These manifolds are given as follows:
$$\Ss(M) = (M^\circ\times S^1)\cup_\id (S^2\times D^2),$$
and
$$\Ss^*(M) = (M^\circ\times S^1)\cup_\tau (S^2\times D^2),$$
where $\tau$ is the unique self-diffeomorphism of $S^2\times S^1$ not extending over $S^2\times D^2$~\cite{Gluck_The-embedding_1962}.  Adopting coordinates $(h, \phi)$ for $S^2$, where $h\in[-1,1]$ represents distance from the equator and $\phi\in S^1$ is angular displacement from a fixed longitude, this map is given by
$$\tau((h,\phi),\theta) = ((h,\phi+\theta),\theta).$$
In other words, $\tau$ twists $S^2$ through one full rotation as we traverse the $S^1$ direction.  In fact, one could consider gluings using powers of $\tau$, but the resulting manifold will only depend (up to diffeomorphism) on the parity of the power~\cite{Gluck_The-embedding_1962}.

Such spaces were well studied in the 1980s and earlier.  Here, we will summarize some of the more pertinent facts.  We denote diffeomorphism and homotopy-equivalence by $\cong$ and $\simeq$, respectively. It appears that a complete classification of when the spin and twisted-spin of a given 3--manifold are diffeomorphic remains open.  However, we have the following significant progress due to Plotnick.

\begin{theorem}[Plotnick~\cite{Plotnick_Equivariant_1986}]\label{thm:Plotnick}
	Let $M$ be a closed, connected, orientable 3--manifold.
	\begin{enumerate}
		\item If $M$ is aspherical, then $\Ss(M)\not\simeq\Ss^*(M)$.
		\item $\Ss(M)\cong\Ss^*(M)$ if every summand of $M$ is either $S^1\times S^2$ or a spherical 3--manifold with all Sylow subgroups of $\pi_1(M)$ cyclic.
	\end{enumerate}
\end{theorem}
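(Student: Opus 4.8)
The plan is to view both parts as statements about a single Gluck twist. Write $\pi=\pi_1(M)$. Both $\Ss(M)$ and $\Ss^*(M)$ are formed by gluing a copy of $S^2\times D^2$ onto $M^\circ\times S^1$ along its boundary $S^2\times S^1$, and the two gluings differ exactly by the Gluck involution $\tau$; equivalently, $\Ss^*(M)$ is obtained from $\Ss(M)$ by a Gluck twist along the belt sphere $F=S^2\times\{0\}$ of the $S^2\times D^2$ summand, a $2$--sphere with trivial normal bundle. A van Kampen computation gives $\pi_1(\Ss(M))\cong\pi_1(\Ss^*(M))\cong\pi$ in all cases: the circle factor of $M^\circ\times S^1$ is killed by the attached $S^2\times D^2$, and $\tau$ acts trivially on $\pi_1(S^2\times S^1)$. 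So part (1) asks whether this Gluck twist can change the homotopy type, and part (2) asks when it is smoothly trivial.

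For part (1) I would pass to universal covers, which is where the aspherical hypothesis is essential. Since $\widetilde{M^\circ}$ is a contractible open $3$--manifold with a $\pi$--orbit of open balls removed, it is homotopy equivalent to a wedge of $2$--spheres indexed by $\pi$, and a Mayer--Vietoris argument then identifies $\pi_2$ of each of $\Ss(M)$ and $\Ss^*(M)$, as a $\Z[\pi]$--module, with the same extension of the augmentation ideal by the free rank-one $\Z[\pi]$--module on the class of $F$. Thus the coarse invariants — $\pi_1$, the homology of the universal cover, $\pi_2$ as a module — are blind to the twist, and one must descend to the equivariant intersection pairing $\lambda$ on $\pi_2$ and its Wall self-intersection refinement $\mu$. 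I expect even these to coincide for $\Ss(M)$ and $\Ss^*(M)$ — the natural spherical generators of $\pi_2$, namely the belt sphere and the ``cylinder-plus-cap'' spheres built from loops in $M$, remain embedded with unchanged intersection data after the twist — so that the difference is detected only by a finer secondary invariant, such as the first Postnikov $k$--invariant together with the image of the fundamental class, which the aspherical hypothesis rigidifies tightly enough to record the reflection built into $\tau$. I expect this final comparison to be the crux; it is the technical heart of Plotnick's ``equivariant intersection form'' method.

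For part (2) the goal is to show the Gluck twist along $F$ is smoothly trivial. I would first reduce to prime $M$, using that a connected-sum decomposition of $M$ induces compatible decompositions of $\Ss(M)$ and $\Ss^*(M)$ (up to connected summands, such as $S^1\times S^3$ or $S^2\times S^2$, on which the twist is irrelevant) carrying the belt sphere into a single summand; so it suffices to treat $M=S^1\times S^2$ and $M=S^3/\Gamma$ with every Sylow subgroup of $\Gamma$ cyclic. Then I would invoke the standard criterion that the Gluck twist along $F$ is trivial as soon as $M^\circ\times S^1=\Ss(M)\setminus\nu(F)$ carries a self-diffeomorphism of the form $(m,\theta)\mapsto(\gamma_\theta(m),\theta)$ restricting to $\tau$ on the boundary, which is possible precisely when the ``rotation of a collar about $\partial M^\circ$'' — the image of the generator of $\pi_1(SO(3))$ — is isotopic rel boundary to the identity in $\Mod(M^\circ,\partial M^\circ)$. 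For $M=S^1\times S^2$ this collar rotation dies by exploiting the rich diffeomorphism group of $S^1\times S^2$, and for $M=S^3/\Gamma$ with $\Gamma$ of cyclic Sylow type one uses that such $\Gamma$ are metacyclic and sit inside $SU(2)$ compatibly with a circle of ambient symmetries, through which the collar rotation can be unwound, assembling these into the loop $\gamma_\theta$. The main obstacle here is exactly this last point: pinning down why the cyclic-Sylow condition — rather than mere $4$--periodicity of $\Gamma$ — forces the collar rotation to bound, and seeing where the argument fails for the quaternionic and other non-metacyclic space forms.
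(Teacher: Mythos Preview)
The paper does not prove this theorem; it is quoted as background from Plotnick~\cite{Plotnick_Equivariant_1986} with no argument given, so there is no ``paper's own proof'' to compare your proposal against.

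Taken on its own merits, your proposal is a sensible outline of the strategy but is openly incomplete rather than a proof. You correctly recast both parts in terms of the Gluck twist along the belt sphere $F$, and you correctly observe that $\pi_1$, the $\Z[\pi]$--module structure on $\pi_2$, and (you expect) the equivariant intersection and self-intersection forms all fail to see the twist. But for part~(1) the actual content is missing: you only gesture at ``a finer secondary invariant, such as the first Postnikov $k$--invariant together with the image of the fundamental class'' and explicitly say you ``expect this final comparison to be the crux'' without carrying it out. That computation \emph{is} the theorem; everything before it is set-up. A reader cannot verify part~(1) from what you have written.

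For part~(2), two steps need tightening. First, the reduction to prime summands: spinning does not commute with connected sum on the nose, and your parenthetical ``up to connected summands, such as $S^1\times S^3$ or $S^2\times S^2$, on which the twist is irrelevant'' needs to be made precise, including why the belt sphere can be taken to lie in a single summand after the decomposition. Second, the endgame for spherical $M$ with cyclic Sylow subgroups is again flagged as ``the main obstacle'' and left undone; you assert that such $\Gamma$ sit in $SU(2)$ compatibly with a circle of symmetries through which the collar rotation unwinds, but you neither identify this circle nor explain why the cyclic-Sylow hypothesis (as opposed to, say, $\Gamma$ merely acting freely on $S^3$) is what makes it exist. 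As written, both parts are honest sketches of where the difficulty lies, not proofs.
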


\begin{remark}\label{rmk:3-skeleta}
	Note that $\Ss(M)$ and $\Ss^*(M)$ have identical 3--skeleta.  One way to see this is to notice that both of these manifolds are obtained from $M\times S^1$ by surgering a circle $\ast\times S^1$, with the result only depending on the choice of framing in $\pi_1(SO(3))\cong\Z_2$.  Since the framings can be assumed to agree on a portion of $\ast\times S^1$, it follows that the surgeries differ only in the attaching of a 4--cell.  As a consequence $\pi_1(\Ss(M))\cong\pi_1(\Ss^*(M))$, and it is not hard to argue that this group is simply $\pi_1(M)$.
\end{remark}

By the above remark, $\Ss(L(p,q))$ can be obtained by surgering out $S^1\times\ast$ inside $S^1\times L(p,q)$.  Pao observed that $\Ss(L(p,q))$ can also be obtained by surgering the simple closed curve in $S^1\times S^3$ representing $p\in\Z\cong\pi_1(S^1\times S^3)$~\cite{Pao_The-topological_1977}.  As in Remark~\ref{rmk:3-skeleta}, there are two choices for the framing of such a surgery.  Let $\Ss_p$ and $\Ss_p'$ denote the manifolds obtained from surgery on the winding number $p$ curve in $S^1\times S^3$.  (Note that it follows that $\Ss_p$ and $\Ss_p'$ are related by a Gluck twist on the belt-sphere of this surgery.) Pao proved the following.

\begin{proposition}[Pao~\cite{Pao_The-topological_1977}]\label{prop:Pao}
	\ 
	\begin{enumerate}
		\item $\Ss_p\cong\Ss(L(p,q))$.
		\item $\Ss'_p\cong\Ss_p$ if $p$ is odd and $\Ss'_p\not\simeq\Ss_p$ if $p$ is even.
	\end{enumerate}
\end{proposition}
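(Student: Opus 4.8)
The plan is to work from two surgery descriptions: $\Ss_p$ and $\Ss_p'$ as the two framings of surgery on a winding--number--$p$ circle $\gamma\subset S^1\times S^3$ (unknotted in the $S^3$--direction), and $\Ss(L(p,q))$ via Remark~\ref{rmk:3-skeleta}, as the result of surgering $\ast\times S^1\subset L(p,q)\times S^1$.

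\emph{Part (1).} Write $L(p,q)$ as $p/q$--surgery on an unknot $U\subset S^3$. Then
$$L(p,q)\times S^1=\big((S^3\setminus\nu(U))\times S^1\big)\cup\big((S^1\times D^2)\times S^1\big)$$
displays $L(p,q)\times S^1$ as a torus surgery on the standard torus $U\times S^1\subset S^3\times S^1$, and a handle calculus shows that this torus surgery followed by the $\ast\times S^1$ surgery (with $\ast$ in the new solid torus) agrees with a single surgery on a $p$--winding, $S^3$--unknotted circle --- that is, on $\gamma$ --- giving $\Ss_p$ for one of the two framings. (Conceptually: $\gamma$ is a regular orbit of the free circle action $t\cdot(a,z,w)=(a+pt,e^{it}z,e^{iqt}w)$ on $S^1\times S^3$, whose orbit space is $L(p,q)$, and one may instead identify both manifolds through the classification of $4$--manifolds with circle action.) The only real obstacle in part (1) is bookkeeping: aligning the surgery framings with the orientation data and tracking the role of $q$; the underlying topology is elementary.

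\emph{Part (2).} Since $\Ss_p$ and $\Ss_p'$ differ by a Gluck twist on the belt $2$--sphere $B=S^2\times\{0\}$ of the surgery, and the framing affects only the regluing of $S^2\times D^2$, both have the same complement $Y:=\Ss_p\setminus\nu(B)=(S^1\times S^3)\setminus\nu(\gamma)$. So $\Ss_p\cong\Ss_p'$ exactly when the Gluck twist $\tau$ of $\partial Y\cong S^2\times S^1$ extends over $Y$ (up to diffeomorphisms extending over $S^2\times D^2$). Use spin structures. First, $\Ss(M)$ is spin for every closed orientable $3$--manifold $M$: the product spin structure on $M^\circ\times S^1$ can be chosen restricting to the spin structure of $S^2\times S^1$ that bounds $S^2\times D^2$, and the two glue; hence $\Ss_p=\Ss(L(p,q))$ is spin. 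Second, $Y\hookrightarrow S^1\times S^3$ is $\pi_1$--injective (codimension $3$) and the longitude of $\gamma$ represents $p$ times a generator of $\pi_1=\Z$, so $H^1(Y;\Z_2)\to H^1(\partial Y;\Z_2)$ is multiplication by $p\bmod 2$. If $p$ is even this map is zero, so every spin structure of $Y$ induces the same spin structure on $\partial Y$, which by spinness of $\Ss_p$ is the one bounding $S^2\times D^2$; but $\tau$ interchanges the two spin structures of $S^2\times S^1$ (a $2\pi$--rotation of the $S^2$ around the $S^1$ is the nontrivial element of $\pi_1(SO(3))$, which does not lift to $\mathrm{Spin}(3)$), so $Y\cup_\tau(S^2\times D^2)=\Ss_p'$ cannot be spin. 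As the vanishing of $w_2$ is a homotopy invariant of $4$--manifolds, this gives $\Ss_p'\not\simeq\Ss_p$.

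For $p$ odd the map $H^1(Y;\Z_2)\to H^1(\partial Y;\Z_2)$ is an isomorphism, so $Y$ carries spin structures inducing both spin structures on $\partial Y$ and the obstruction above vanishes (both $\Ss_p$ and $\Ss_p'$ are spin); one then completes the proof by producing an actual self--diffeomorphism of $Y$ realizing $\tau$ on the boundary. This is the crux of part (2): the spin computation only removes the obstruction, and the diffeomorphism must be built by hand --- e.g.\ by a handle calculus on the surgery diagram for $\gamma$, or using that $Y$ fibers over $S^1$ with fiber $S^3$ minus $p$ balls and cyclic monodromy, where oddness of $p$ is exactly what lets the framing change be absorbed --- whereas the even case is settled outright by the $w_2$ argument.
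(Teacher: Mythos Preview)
The paper does not prove this proposition; it is attributed to Pao and cited to~\cite{Pao_The-topological_1977} without argument, so there is nothing in the paper to compare your attempt against directly.

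On its own merits: your sketch of Part~(1) via the free $S^1$--action $t\cdot(a,z,w)=(a+pt,e^{it}z,e^{iqt}w)$ on $S^1\times S^3$, with quotient $L(p,q)$ and regular orbit the winding--$p$ curve $\gamma$, is the right idea and is essentially Pao's original perspective, though neither the ``handle calculus'' nor the passage from orbit space to surgery is actually carried out. Your Part~(2) argument for $p$ even is correct and efficient: the restriction $H^1(Y;\Z_2)\to H^1(\partial Y;\Z_2)$ really is multiplication by $p\bmod 2$ (since the $S^1$--factor of $\partial Y$ is a longitude of $\gamma$), and one can confirm that $\tau$ swaps the two spin structures on $S^2\times S^1$ by the standard observation that $(S^2\times D^2)\cup_{\id}(S^2\times D^2)=S^2\times S^2$ is spin while $(S^2\times D^2)\cup_\tau(S^2\times D^2)=\CP^2\#\overline{\CP^2}$ is not. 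Hence $\Ss_p'$ is non-spin and $\Ss_p'\not\simeq\Ss_p$ for $p$ even.

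The genuine gap is the odd case of Part~(2). You correctly note that the spin obstruction vanishes and that an explicit self--diffeomorphism of $Y$ realizing $\tau$ on $\partial Y$ must still be produced, but you do not produce it --- you only name two possible strategies (handle calculus, or the fibration of $Y$ over $S^1$ with fiber a $p$--punctured $S^3$). Since this diffeomorphism is exactly the content of the assertion $\Ss_p'\cong\Ss_p$ for $p$ odd, your proof is incomplete here. The fibration approach can be made to work, but it requires care: one must exhibit a fiber--preserving involution that commutes with the cyclic monodromy and flips the framing on the boundary, and the parity of $p$ enters in checking that such an involution exists.
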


We remark that it is not clear whether Pao identified $\Ss_p$ as a spun lens space, though it appears that Plotnick made the connection~\cite{Plotnick_Equivariant_1986}. (See also~\cite{Suciu_The-oriented_1988}.)  Moreover, many authors who have studied Pao's manifolds since seem not to have noted the connection with spun lens spaces, instead studying them as manifolds admitting genus one broken Lefschetz fibrations~\cite{Baykur-Kamada_Classification_2015,Baykur-Saeki_Simplifying_2017,Hayano_On-genus-1_2011}.

Combining Theorem~\ref{thm:Plotnick}(2) and Proposition~\ref{prop:Pao}(1), we have the following corollary.

\begin{corollary}\label{coro:lens}
	For all $1\leq q<p$, both $\Ss(L(p,q))$ and $\Ss^*(L(p,q))$ are diffeomorphic to $\Ss_p$.
\end{corollary}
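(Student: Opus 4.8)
The plan is to verify that lens spaces satisfy the arithmetic hypothesis of Theorem~\ref{thm:Plotnick}(2), and then to concatenate the resulting diffeomorphism with Proposition~\ref{prop:Pao}(1). Since $1\leq q<p$ forces $p\geq 2$, the space $L(p,q)$ is the quotient of $S^3$ by a free orthogonal action of $\Z_p$; in particular it is a prime, spherical 3--manifold with $\pi_1(L(p,q))\cong\Z_p$. As $\Z_p$ is cyclic, every subgroup of it --- hence in particular every Sylow subgroup of $\pi_1(L(p,q))$ --- is cyclic. Taking $M=L(p,q)$ as its own (unique) prime summand, the hypotheses of Theorem~\ref{thm:Plotnick}(2) are met, so $\Ss(L(p,q))\cong\Ss^*(L(p,q))$.

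On the other side, Proposition~\ref{prop:Pao}(1) identifies $\Ss(L(p,q))$ with the fixed manifold $\Ss_p$ obtained by surgery on the winding-number-$p$ curve in $S^1\times S^3$ with the untwisted framing; note that this statement already contains the assertion that $\Ss(L(p,q))$ does not depend on the choice of $q$ coprime to $p$. Concatenating the two diffeomorphisms gives
$$\Ss^*(L(p,q))\;\cong\;\Ss(L(p,q))\;\cong\;\Ss_p,$$
which is the desired conclusion.

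I do not expect a genuine obstacle here: the corollary is essentially a bookkeeping consequence of the two cited results. The only points that require any thought are the (immediate) observation that a finite cyclic group has all Sylow subgroups cyclic, and the (equally minor) point that Theorem~\ref{thm:Plotnick}(2) should be applied with $L(p,q)$ regarded as its own prime summand, so that no connected-sum decomposition enters. If one wished to avoid relying on Pao's surgery description for the independence of $\Ss(L(p,q))$ on $q$, one could instead observe directly that the once-punctured lens space is determined by $p$ alone, so that $\Ss(L(p,q))$ depends only on $p$; but this refinement is not needed given the results already in hand.
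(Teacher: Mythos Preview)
Your argument is correct and is exactly the paper's: the corollary is stated immediately after Theorem~\ref{thm:Plotnick} and Proposition~\ref{prop:Pao} as a direct combination of \ref{thm:Plotnick}(2) and \ref{prop:Pao}(1), and your verification that $\pi_1(L(p,q))\cong\Z_p$ has cyclic Sylow subgroups is the only content needed. One caution about your final parenthetical: the once-punctured lens space $L(p,q)^\circ$ is \emph{not} determined by $p$ alone (capping off the $S^2$ boundary recovers $L(p,q)$, and $L(p,q)\cong L(p,q')$ only when $q'\equiv\pm q^{\pm1}\pmod p$), so that alternative route would not work as stated---but as you note, it is not needed.
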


We will let $\Pp = \{\Ss_p\}_{p\in\N}\cup\{\Ss_p'\}_{p\in2\N}$ be the set of Pao's manifolds, and we will refer to the $\Ss_p$ as the \emph{spun lens spaces} and to the $\Ss_p'$ as their \emph{siblings}.  

\begin{remark}
	Note that there are two pertinent 2--knots in the manifold $\Ss_p = \Ss(L(p,q))$.  The first is the core of the $D^2\times S^2$ used in the spinning construction.  Performing a Gluck twist on this 2--knot results in $\Ss^*(L(p,q))$, while surgery yields $S^1\times L(p,q)$.  The second 2--knot has the property that surgery yields $S^1\times S^3$; thus, it cannot be isotopic to the first 2--knot.  Performing a Gluck twist on this latter 2--knot results in the sibling manifold $\Ss'_p$.
\end{remark}

Finally, we extend the definition of twist-spinning to 3--manifold/knot pairs. For a fixed 3--manifold $M$ and a knot $K$ in $M$, let $\Ss^n(M,K)$ denote the \emph{$n$--twist-spin} of the pair $(M,K)$:
$$\Ss^n(M,K) = ((M,K)^\circ\times S^1)\bigcup_{\tau^n}(S^2\times D^2,\{\frak n,\frak s\}\times D^2),$$
where the gluing is via the $n$--fold power of the Gluck twist map defined above.  We write $\Ss^k(M,K) = (\Ss^k(M),\Ss^k(K))$. Since $\tau^2$ extends over $S^2\times D^2$, we have that $\Ss^k(M)$ is either $\Ss(M)$ or $\Ss^*(M)$ (based on whether $k$ is even or odd).  On the other hand, the 2--knots $\Ss^k(K)$ will likely represent different isotopy classes as $k$ varies.

When $M\cong S^3$, the resulting twist-spun knots $\Ss^n(K)$ have been well studied, starting with Zeeman~\cite{Zeeman_Twisting_1965}, who introduced the general notion (following Artin~\cite{Artin_Zur-Isotopie_1925}).  On the other hand, it appears that very little attention has been focused on the case of twist-spinning knots in non-trivial 3--manifolds.

\subsection{Heegaard splittings and trisections}\ 

We briefly recall the basic set-up of the theories of Heegaard splittings and trisections.  A \emph{genus $g$ Heegaard splitting} of a closed, connected, orientable 3--manifold $M$ is a decomposition
$$M = H_\delta\cup_\Sigma H_\varepsilon,$$
where $H_\delta$ and $H_\varepsilon$ are handlebodies whose common boundary is a closed surface $\Sigma$ or genus $g$. Every closed 3--manifold admits a Heegaard splitting~\cite{Bing_An-alternative_1959,Moise_Affine_1952}, and any two Heegaard splittings of a fixed manifold are stably equivalent~\cite{Reidemeister_Zur-dreidimensionalen_1933,Singer_Three-dimensional_1933}. 

Let $\delta$ be a collection of $g$ disjoint curves on $\Sigma$ arising as the boundary of $g$ properly embedded disks in $H_\delta$ and satisfying the property that $\Sigma\setminus\nu(\delta)$ is connected and planar.  Let $\varepsilon$ be a similar collection of curves corresponding to $H_\varepsilon$.  The triple $(\Sigma,\delta,\varepsilon)$ is called a \emph{Heegaard diagram} for the splitting $M = H_\delta\cup_\Sigma H_\varepsilon$.  Any two diagrams for a given splitting can be related by handleslides (among the respective sets of curves) and diffeomorphism~\cite{Johannson_Topology_1995}.

A \emph{$(g,k)$--trisection} of a smooth, orientable, connected, closed 4--manifold $X$ is a decomposition $X = X_1\cup X_2\cup X_3$, where 
\begin{enumerate}
	\item each $X_i$ is a four-dimensional 1--handlebody, $\natural^k(S^1\times B^3)$;
	\item for $i\not=j$, each of $X_i\cap X_j$ is a three-dimensional handlebody, $\natural^g(S^1\times D^2)$; and
	\item the common intersection $\Sigma = X_1\cap X_2\cap X_3$ is a closed surface of genus $g$.
\end{enumerate}
The surface $\Sigma$ is called the \emph{trisection surface}, and the \emph{genus} of the trisection is said to be $g = g(\Sigma)$.  The \emph{trisection genus} of a 4--manifold $X$ is the smallest value of $g$ for which $X$ admits a trisection of genus $g$, but no trisection of smaller genus.

Note that $\Sigma$ is a Heegaard surface for $\partial X_i\cong\#^k(S^1\times S^2)$, so $0\leq k\leq g$. As in the case of Heegaard splittings, every smooth 4--manifold admits a trisection, and any two trisections for a fixed 4--manifold are stably equivalent~\cite{Gay-Kirby_Trisecting_2016}.

A \emph{trisection diagram} is a quadruple $(\Sigma, \alpha, \beta, \gamma)$ where each triple $(\Sigma,\alpha, \beta)$, etc., is a Heegaard diagram for $\#^k(S^1\times S^2)$.  As before, any two diagrams corresponding to a given splitting can be made diffeomorphic after  handleslides within each collection of curves. See~\cite{Gay-Kirby_Trisecting_2016,Meier-Schirmer-Zupan_Classification_2016} for complete details.


\subsection{Doubly-pointed diagrams}\ 

A \emph{doubly-pointed} Heegaard diagram is a tuple $(\Sigma,\delta,\varepsilon,z,w)$, consisting of a Heegaard diagram, together with a pair of base points, $z$ and $w$, in  $\Sigma\setminus\nu(\delta\cup\varepsilon)$.  Suppose the underlying Heegaard diagram describes the 3--manifold $M$.  Then, the base points encode a knot $K$ in $M$ in the following way.  Let $\upsilon_\delta$ and $\upsilon_\varepsilon$ be arcs connecting $z$ and $w$ in $\Sigma\setminus\nu(\delta)$ and $\Sigma\setminus\nu(\varepsilon)$, respectively.  Equivalently, $\upsilon_\delta$ and $\upsilon_\varepsilon$ are boundary parallel arcs contained in the 0--cells of the respective handlebodies.  The knot $K$ is the the union of these two (pushed-in) arcs along their common end points, $z$ and $w$.  The following theorem is standard.

\begin{theorem}
	Given any 3--manifold/knot pair $(M,K)$, there is a doubly-pointed Heegaard diagram describing $(M,K)$.  
\end{theorem}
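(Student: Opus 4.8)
The plan is to produce a Heegaard splitting $M = H_\delta \cup_\Sigma H_\varepsilon$ with respect to which $K$ is in \emph{$1$-bridge position}: $K$ meets $\Sigma$ transversely in two points $z$ and $w$, and each of $K\cap H_\delta$, $K\cap H_\varepsilon$ is a single boundary-parallel arc. Granting this, one chooses cut systems $\delta$ and $\varepsilon$, lets $\upsilon_\delta\subset\Sigma\setminus\nu(\delta)$ and $\upsilon_\varepsilon\subset\Sigma\setminus\nu(\varepsilon)$ be arcs from $z$ to $w$ onto which the two bridges can be isotoped rel endpoints, and then $(\Sigma,\delta,\varepsilon,z,w)$ together with these arcs is a doubly-pointed Heegaard diagram whose associated knot is isotopic to $K$. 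So everything reduces to exhibiting a $1$-bridge presentation of $(M,K)$, which I would obtain in two steps.

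First, I would produce \emph{some} bridge presentation. Fix a self-indexing Morse function $f\colon M\to[0,3]$; it induces a genus-$h$ Heegaard splitting with $\Sigma = f^{-1}(3/2)$, $H_\delta = f^{-1}([0,3/2])$, and $H_\varepsilon = f^{-1}([3/2,3])$. After a small isotopy $f|_K$ is Morse with distinct critical values, none equal to $3/2$; then, pushing each local maximum of $f|_K$ up across the level $3/2$ and each local minimum down across it, one arranges that every component of $K\cap H_\varepsilon$ carries exactly one critical point of $f|_K$, a maximum, and every component of $K\cap H_\delta$ exactly one, a minimum. Each such arc is consequently an unknotted bridge, so $(M,K)$ sits in $(h,b)$-bridge position, where $b$ is the common number of arcs on the two sides.

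Second, I would drive the bridge number down to one. If $b\ge 2$, then, since $K$ meets the two handlebodies in arcs that alternate as one traverses $K$, there are arcs $a_1$ and $a_2$ of $K\cap H_\delta$ joined by an arc $e$ of $K\cap H_\varepsilon$. Let $N\cong D^2\times[0,1]$ be a regular neighborhood of the boundary-parallel arc $e$ inside $H_\varepsilon$, meeting $\Sigma$ precisely in the disks $D^2\times\{0\}$ and $D^2\times\{1\}$. Excise $N$ from $H_\varepsilon$ and reattach it to $H_\delta$: this replaces $\Sigma$ by a surface of genus $h+1$, replaces each side by a genus-$(h+1)$ handlebody, fuses $a_1\cup e\cup a_2$ into a single boundary-parallel arc of the new $H_\delta$, and leaves each of the remaining arcs of $K$ a boundary-parallel arc on its side. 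Thus $(h,b)$-position becomes $(h+1,b-1)$-position, and iterating $b-1$ times yields a $(g,1)$-bridge position for $(M,K)$ with $g = h+b-1$.

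It then remains to read the diagram off this $1$-bridge position as in the first paragraph; the only thing to check there is that, having fixed the arcs $\upsilon_\delta,\upsilon_\varepsilon$ onto which the two bridges retract, one may choose the cut systems $\delta$ and $\varepsilon$ disjoint from $\upsilon_\delta$ and $\upsilon_\varepsilon$ respectively, which is routine since the complement of an arc in a closed surface is connected. I expect the real work to be the second step above: verifying that excising the $1$-handle $N$ from $H_\varepsilon$ and gluing it to $H_\delta$ yields genuine handlebodies (rather than compression bodies) and that the fused arc of $K$ is again boundary-parallel. This is the standard stabilization move for bridge splittings, and once it is in hand the theorem follows.
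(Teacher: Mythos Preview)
The paper does not supply a proof of this theorem; it simply declares the result ``standard'' and moves on.  Your argument is precisely the standard one and is correct: put $K$ in bridge position with respect to some Heegaard splitting by pushing the extrema of $f|_K$ to the appropriate sides of $\Sigma$, then perform $b-1$ meridional stabilizations (your tubing move) to reach $1$--bridge position, and read off the diagram.  The one step you pass over quickly---that an arc in $H_\varepsilon$ on which $f$ has a single interior maximum is automatically boundary-parallel---does require a short justification (for instance, use the upward gradient flow of $f$ to sweep the arc to $\partial H_\varepsilon$, producing a singular bigon, and then appeal to Dehn's Lemma in the arc complement), but this is classical as well.  There is nothing further to compare.
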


A \emph{doubly pointed} trisection diagram is a tuple $(\Sigma,\alpha,\beta,\gamma,z,w)$ where each sub-tuple $(\Sigma,\alpha,\beta,z,w)$, etc., is a doubly pointed Heegaard diagram for $(\#^k(S^1\times S^2), U)$, where $U$ is the unknot.  Suppose the underlying trisection diagram describes the 4--manifold $X$.  Then the base points encode a knotted sphere $\Kk$ in $X$ in the following way.  Let $D_i\subset \partial X_i$ be spanning disks for the three unknots described by the diagram.  Let $\Kk$ be the union of these three disks, after the interiors of the disk have been isotoped to lie in the interiors of the $X_i$.

The decomposition $(X,\Kk) = (X_1,D_1)\cup (X_2,D_2)\cup (X_3,D_3)$ is called a \emph{1--bridge trisection} of the pair $(X,\Kk)$, and $\Kk$ is said to be in \emph{1--bridge position} with respect to the underlying trisection of $X$.  The following results are proved in a forth-coming article with Alex Zupan~\cite{Meier-Zupan_Trisecting_}.

\begin{theorem}
	Let $X$ be a smooth, orientable, connected, closed 4--manifold, and let $\Kk$ be a knotted sphere in $X$.  There exists a trisection of $X$ with respect to which $\Kk$ can be isotoped to lie in 1--bridge position.
\end{theorem}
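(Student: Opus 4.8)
The plan is to proceed in two stages: first arrange $\Kk$ to lie in bridge position with respect to \emph{some} trisection of $X$, and then lower the bridge number to one by stabilizing that trisection. For the first stage I would fix a handle decomposition of $X$ with a single $0$--handle and a single $4$--handle and, by a standard general position and handle sliding argument, isotope $\Kk$ into a \emph{banded unlink} presentation adapted to this decomposition: $\Kk$ meets the $1$--handlebody and its dual in trivial systems of disks and meets the $2$--handle region in a union of vertical annuli and bands recording the saddles of $\Kk$ relative to the $2$--handle attachments. Feeding the handle decomposition into the standard construction of a trisection~\cite{Gay-Kirby_Trisecting_2016} and tracking where the banded unlink sits, one obtains a trisection $\mathcal T = X_1\cup X_2\cup X_3$ of $X$ together with a $(b;c_1,c_2,c_3)$--bridge position of $\Kk$: each $\Kk\cap X_i$ is a system of $c_i$ boundary parallel disks, each $\Kk\cap(X_i\cap X_j)$ is a system of boundary parallel arcs, and $\Kk$ meets $\Sigma = X_1\cap X_2\cap X_3$ transversally in $2b$ points. (Alternatively one may invoke the general existence of bridge trisections of knotted surfaces in $4$--manifolds.)

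For the second stage I would introduce a \emph{bridge-reducing stabilization}, the $4$--dimensional analogue of a meridional stabilization of a bridge splitting of a knot in a $3$--manifold. Starting from a $(b;c_1,c_2,c_3)$--bridge trisection with $b\geq 2$, choose --- after first redistributing bridge arcs among the $X_i\cap X_j$ by perturbations, if necessary --- a bridge arc $\kappa$ of $\Kk$ lying in one of the handlebodies $X_i\cap X_j$; since $\kappa$ is boundary parallel it cobounds a bigon in $X_i\cap X_j$ with an arc $\bar\kappa\subset\Sigma$. Tubing $\Sigma$ along $\bar\kappa$ --- equivalently, performing the stabilization of $\mathcal T$ whose new handle runs alongside $\kappa$ --- absorbs $\kappa$ into the new handle and cancels a pair of points of $\Kk\cap\Sigma$, so that $b$ drops by one; one then checks that the disk and arc systems of $\Kk$ remain trivial, so the output is again a bridge trisection, now of smaller bridge number and larger trisection genus. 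Iterating strictly decreases $b$, and when $b=1$ the two points of $\Kk\cap\Sigma$ force exactly one bridge arc in each $X_i\cap X_j$ and (using that $\Kk$ is a sphere) exactly one bridge disk in each $X_i$; that is, $\Kk$ is in $1$--bridge position with respect to a trisection of $X$, as desired.

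I expect the second stage to be the main obstacle. Two points will require care: first, verifying that tubing $\Sigma$ along $\bar\kappa$ is genuinely a stabilization --- that it preserves the three defining conditions of a trisection and raises the genus in the controlled way of a stabilization --- and identifying its type; and second, guaranteeing that a bridge arc suitable for the move is always available when $b\geq 2$, which is where the preliminary perturbations shuffling bridge arcs among the three handlebodies enter. Finally, the hypothesis that $\Kk$ is an embedded sphere is essential precisely at the last step: a connected surface meeting $\Sigma$ transversally in only two points must be a sphere, so for surfaces of positive genus the iteration can lower $b$ only to a positive bound and the conclusion genuinely fails.
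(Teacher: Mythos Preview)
The paper does not actually prove this theorem: it is stated without proof and attributed to a forthcoming article with Zupan~\cite{Meier-Zupan_Trisecting_}, so there is no in-paper argument to compare your proposal against.

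That said, your two-stage outline --- first obtain a $(b;c_1,c_2,c_3)$--bridge trisection from a handle decomposition via a banded unlink presentation, then iteratively lower $b$ by a meridional-type stabilization of the trisection along a bridge arc --- is exactly the strategy carried out in the Meier--Zupan bridge trisection papers (the $S^4$ case in~\cite{Meier-Zupan_Bridge_2015} and its extension to general $X$ in the cited forthcoming work). Your identification of the delicate points is also accurate: one must check that tubing $\Sigma$ along the shadow arc $\bar\kappa$ is a genuine trisection stabilization (not merely a Heegaard stabilization of one of the $\partial X_i$), and one must arrange via perturbations that a suitable bridge arc exists at each step. Your final paragraph correctly isolates where the sphere hypothesis is used. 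So while I cannot compare your argument to one in this paper, your proposal is sound and aligns with the approach of the referenced source.
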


\begin{corollary}
	For any 4--manifold/2--knot pair $(X,\Kk)$, there is a doubly pointed trisection diagram describing $(X,\Kk)$.
\end{corollary}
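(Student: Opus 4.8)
The corollary should be read as the diagrammatic shadow of the preceding theorem, so the plan is to extract a doubly-pointed trisection diagram directly from a $1$--bridge trisection. First I would apply the theorem above to fix a trisection $X = X_1\cup X_2\cup X_3$ with respect to which $\Kk$ lies in $1$--bridge position, writing $(X,\Kk) = (X_1,D_1)\cup(X_2,D_2)\cup(X_3,D_3)$, where each $D_i$ is boundary-parallel in $X_i$, i.e.\ obtained by pushing the interior of a spanning disk $\widehat D_i\subset\partial X_i$ for an unknot $\partial D_i\subset\partial X_i$ into the interior of $X_i$. Set $H_{ij} = X_i\cap X_j$ and $\Sigma = X_1\cap X_2\cap X_3$. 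After a standard normalization of the $\widehat D_i$, the sphere $\Kk$ meets $\Sigma$ in exactly two points $z,w$, these are the common endpoints of the three bridge arcs $a_{ij} = \partial D_i\cap H_{ij} = \partial D_j\cap H_{ij}$, each $a_{ij}$ is $\partial$--parallel in the handlebody $H_{ij}$ (witnessed by a subdisk of $\widehat D_i$), and $\partial D_i$ is the union of the two bridge arcs adjacent to $X_i$.

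Next I would fix a trisection diagram $(\Sigma,\alpha,\beta,\gamma)$ for the underlying trisection, which exists by~\cite{Gay-Kirby_Trisecting_2016}, with the usual convention that $\alpha$, $\beta$, $\gamma$ bound the defining disk systems of $H_{12}$, $H_{23}$, $H_{13}$, so that for instance $(\Sigma,\alpha,\beta)$ is a genus $g$ Heegaard diagram for $\partial X_2 = H_{12}\cup_\Sigma H_{23}\cong\#^k(S^1\times S^2)$. A small isotopy of $\Kk$ supported in a bicollar of $\Sigma$ — which merely slides $z$ and $w$ along $\Sigma$, dragging the bridge arcs, and hence preserves $1$--bridge position — lets me arrange that $z,w\in\Sigma\setminus\nu(\alpha\cup\beta\cup\gamma)$.

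The heart of the argument is to recognize each of $(\Sigma,\alpha,\beta,z,w)$, $(\Sigma,\beta,\gamma,z,w)$, $(\Sigma,\alpha,\gamma,z,w)$ as a doubly-pointed Heegaard diagram for $(\#^k(S^1\times S^2),U)$ whose associated knot is $\partial D_2$, $\partial D_3$, $\partial D_1$ respectively. Take $(\Sigma,\alpha,\beta,z,w)$: its knot $\partial D_2$ bounds the spanning disk $\widehat D_2$ in $\partial X_2$, so it is the unknot $U$ in $\#^k(S^1\times S^2)$; and since the bridge arcs $a_{12}$, $a_{23}$ are $\partial$--parallel in $H_{12}$, $H_{23}$, an innermost-disk argument lets me take them to be push-ins of arcs $\upsilon_\alpha\subset\Sigma\setminus\nu(\alpha)$ and $\upsilon_\beta\subset\Sigma\setminus\nu(\beta)$ — which is precisely the data from which the doubly-pointed Heegaard diagram reconstructs its knot. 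The same reasoning applies to the other two sub-diagrams, and all three use the same pair $\{z,w\} = \Kk\cap\Sigma$, so $(\Sigma,\alpha,\beta,\gamma,z,w)$ is a doubly-pointed trisection diagram. Finally, feeding this diagram into the reconstruction procedure recovers spanning disks for the three $\partial D_i$ and reassembles them; since that output is well defined up to diffeomorphism of pairs by the doubly-pointed trisection formalism of~\cite{Meier-Zupan_Trisecting_}, it is diffeomorphic to $(X_1,D_1)\cup(X_2,D_2)\cup(X_3,D_3) = (X,\Kk)$.

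The step I expect to cost the most care is the identification in the third paragraph: realizing the geometric bridge arcs of the $1$--bridge trisection as the arcs $\upsilon_\alpha,\upsilon_\beta,\upsilon_\gamma$ of the doubly-pointed formalism, which amounts to the (standard but not entirely trivial) fact that a $\partial$--parallel arc in a handlebody can be isotoped rel endpoints into the boundary surface and off a prescribed disk system. This is where one genuinely uses that the $D_i$ are boundary-parallel disks, and not merely disks meeting $\Sigma$ in two points. The remaining points — normalizing the $\widehat D_i$ so that $\Kk\cap\Sigma$ has two points, and checking that the preliminary isotopy of $z,w$ off $\alpha\cup\beta\cup\gamma$ preserves $1$--bridge position — are routine once one keeps the relevant isotopies supported near $\Sigma$.
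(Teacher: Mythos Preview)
The paper does not actually prove this corollary: both the preceding theorem and this corollary are announced as results of the forthcoming article~\cite{Meier-Zupan_Trisecting_}, and no argument is given here. So there is no ``paper's own proof'' to compare against.

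That said, your proposal is a correct outline of how the corollary is deduced from the preceding theorem, and is presumably close in spirit to what appears in~\cite{Meier-Zupan_Trisecting_}. The logic---pass from a $1$--bridge trisection to a diagram by choosing cut systems for the three handlebodies and then realizing each bridge arc as the push-in of an arc on $\Sigma$ avoiding the relevant cut system---is the natural one, and you have correctly isolated the only substantive step: that a boundary-parallel arc in a handlebody can be taken, rel endpoints, to be the push-in of an arc in the boundary surface missing a prescribed complete disk system. This is standard (an innermost-circle/outermost-arc argument on the intersection of a parallelism disk with the cut disks, or equivalently the observation that after cutting along the disks every properly embedded arc in the resulting $3$--ball is boundary-parallel into the sphere minus the disk scars). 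The remaining points---arranging $\Kk\cap\Sigma=\{z,w\}$, nudging $z,w$ off $\alpha\cup\beta\cup\gamma$, and invoking well-definedness of the reconstruction---are bookkeeping, as you say.
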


\section{Proof of main theorems}\label{sec:proofs}

In this section, we give the proofs for the main theorems described in the introduction.  First, we will adopt the Morse 2--function perspective to prove that both the spin and twisted-spin of a 3--manifold admitting a genus $g$ Heegaard splitting admit $(3g,g)$--trisections.

Roughly, for a smooth, orientable, connected, closed 4--manifold $X$, a map $F\colon X\to \R^2$ is a \emph{Morse 2--function} if
\begin{enumerate}
	\item Every regular value $y\in\R^2$ has a neighborhood $D^2$ such that $F$ is projection $S\times D^2\to D^2$ for some closed surface $S$.
	\item The set critical points of $F$ is a smooth one-dimensional submanifold whose image in $\R^2$ is a collection of immersed curves with isolated crossings and semi-cubical cusps.
	\item Every critical value $y\in\R^2$ has local coordinates such that $F$ looks like a generic homotopy of a Morse function:  If $y$ is a cusp, $F$ looks like the birth of a canceling pair of Morse critical points.  If $y$ is a crossing point, $F$ looks like two Morse critical points swapping height. If $y$ is not on a cusp or a crossing point, $F$ looks like a Morse critical point times $I$.
\end{enumerate}
See \cite{Gay-Kirby_Trisecting_2016} for a complete definition.  See also~\cite{Baykur-Saeki_Simplifying_2017} for a detailed overview of various types of generic functions from 4--manifolds to surfaces.

We now sketch a quick, Morse 2--function proof of our first result, which was first conceived by Alex Zupan.  Our proof of Theorem~\ref{thm:diags}, below, will provide a second, independent proof of this result.
 
\begin{reptheorem}{thm:SpunTri}
	Suppose that $M$ admits a genus $g$ Heegaard splitting.  Then each of $\mathcal S(M)$ and $\mathcal S^*(M)$ admits a $(3g,g)$--trisection.
\end{reptheorem}

\begin{proof}
	Let $M$ be a closed, connected, orientable 3--manifold, and suppose that $M$ admits a genus $g$ Heegaard splitting $\Hh$.  Let $f\colon M\to\R$ be a Morse function corresponding to $\Hh$, and suppose that $f$ has isolated critical points of non-decreasing index.


	Consider the 4--manifold $\bar X = M\times S^1$, and let $\bar F\colon X\to\R^2$ be the Morse 2--function induced fiber-wise by the Morse function $f$.  See Figure~\ref{fig:Morse2}(a).  The map $\bar F$ has a single (definite) fold of both indices zero and three, as well as $g$ indefinite folds of both indices one and two.  Note that $\bar F(\bar X)$ is an annulus.  We decorate indefinite folds with arrows that point from the higher genus side of the fold to the lower genus side.

\begin{figure}[h!]
	\centering
	\includegraphics[width=.9\textwidth]{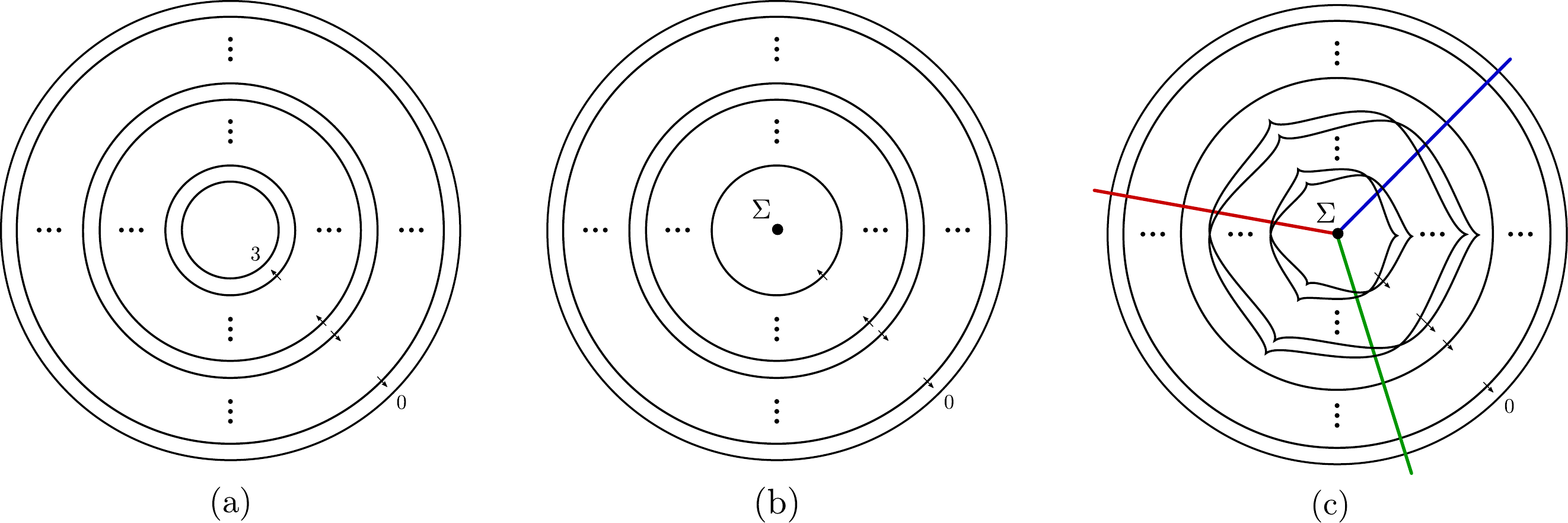}
	\caption{(a) The Morse 2--function $\bar F$ on $\bar X=M\times S^1$ induced by a Morse function $f$ on $M$ with isolated critical points of non-decreasing index. (b) The corresponding Morse 2--function $F$ on the manifold $X$ obtained as surgery on the round three-handle inside $\bar X$. (c) The trisected Morse 2--function homotopic to $F$ with no folds of index two.}
	\label{fig:Morse2}
\end{figure}
	
	Finally, let $X$ denote a 4--manifold obtained from $\bar X$ by surgering out the round three-handle, whose core projects to the fold of index three.  In other words, cut out the $B^3\times S^1$ corresponding the the $h_3\times S^1$, where $h_3$ is the three-handle of $M$, and glue in a copy of $S^2\times D^2$.  In fact, there are two ways to do this~\cite{Gluck_The-embedding_1962}.  One choice results in $\Ss(M)$, the other in $\Ss^*(M)$.  However, this distinction is not visible in the base diagrams of the Morse 2--functions, so we will simply let $X$ denote either choice.
	
	Let $F\colon X\to\R^2$ denote the resulting Morse 2--function, which differs from $\bar F$ in that it has no (definite) fold of index three, and $F(X)$ is a disk.  See Figure~\ref{fig:Morse2}(b).  Note that the fiber $\Sigma$ over the central point of the disk is a two-sphere. To complete the proof, we will homotope $F$, using standard moves, until it has no folds of index two or greater.  To do this, we will take each fold of index two and transform it into an immersed fold of index one containing six cusps.  We can do this one index two fold at a time, and we illustrate this sub-process in Figure~\ref{fig:Homotope}.

\begin{figure}[h!]
	\centering
	\includegraphics[width=.9\textwidth]{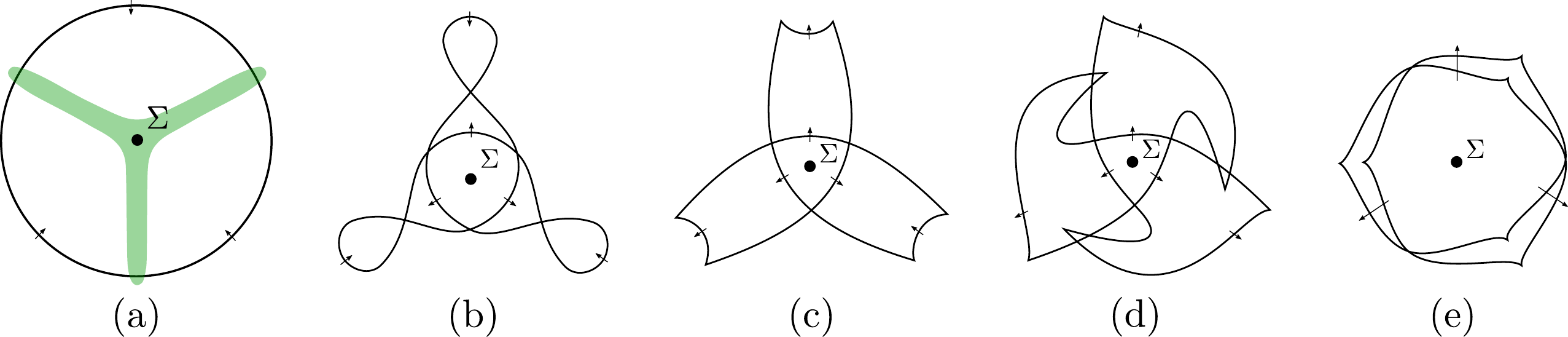}
	\caption{The process (from left to right) of turning a index two fold inside out.  Arrows indicate the direction of decrease of the fiber genus.}
	\label{fig:Homotope}
\end{figure}

	First, we select three points on the index two fold and drag them radially towards and past the center point, this can be seen as a sort of a contraction of the shaded area in Figure~\ref{fig:Homotope}(a), which results in Figure~\ref{fig:Homotope}(b).  This is accomplished via a $\text{R2}_0$ move followed by a $\text{R3}_3$ move.  (See~\cite{Baykur-Saeki_Simplifying_2017} for details.  All base diagram moves employed here are always-realizable.) Next, we turn each of the three kinks into a pair of cusps, resulting in Figure~\ref{fig:Homotope}(c). This can be accomplished via three instances of the flip move, each followed by a $\text{R2}_2$ move.  Note that the genus of $\Sigma$ has been increased by three.  Figure~\ref{fig:Homotope}(d) follows via three $C$-moves, and Figure~\ref{fig:Homotope}(e) follows after three $\text{R2}_2$ moves.
	
	After the above process has been carried out on the innermost indefinite fold of index two in Figure~\ref{fig:Morse2}(b), the resulting six-cusped fold can be pushed outward, past the indefinite folds of index two.  To pass each such fold, we require six instances of the $C$-move, followed by three $\text{R3}_3$ moves, followed by six $\text{R2}_2$ moves. Then, the above process can be repeated for each indefinite fold of index two, resulting in the simplified diagram shown in Figure~\ref{fig:Morse2}(c).

	Note that the fiber $\Sigma$ of the central point now has genus~$3g$.  Choose three rays as in Figure~\ref{fig:Morse2}(c): The preimages of these rays are genus $3g$ handlebodies, which intersect at their common boundary, $\Sigma$.  Similarly, the preimages of the regions between the rays are diffeomorphic to $\natural^g(S^1\times B^3)$. (Each such region is the thickening of a three-dimensional handlebody union  $2g$ three-dimensional two-handles that are attached along primitive curves.)  Therefore, we have a $(3g,g)$--trisection of $X$, as desired.

\end{proof}

Note that the base diagram in Figure~\ref{fig:Morse2}(c) is a simplification of the original base diagram, but is \emph{not} ``simple'' in the sense of~\cite{Baykur-Saeki_Simplifying_2017}. This raises the following question.

\begin{question}
	Does every four-manifold admit a Morse 2--function whose base diagram consists of a disjoint union of indefinite folds of index one, some of which are embedded with no cusps and the rest of which are are immersed with six cusps and three double points, as in Figure~\ref{fig:Homotope}(e)?
\end{question}

\subsection{From Heegaard diagrams to trisection diagrams}\label{subsec:diags}\ 

Next, we show how, given a Heegaard diagram for a 3--manifold $M$, one can produce a trisection diagram for either $\Ss(M)$ of $\Ss^*(M)$.  Though the distinction between this pair of 4--manifolds was not visible from the Morse 2--function perspective, these manifolds are not, in general, diffeomorphic, so they will necessarily be described by different trisection diagrams.

\begin{reptheorem}{thm:diags}
	Let $(S,\delta,\varepsilon)$ be a genus $g$ Heegaard diagram for a closed 3--manifold $M$ with the property that $H_\varepsilon$ is standardly embedded in $S^3$.  Then,
	\begin{enumerate}
		\item the 4--manifold $\Ss(M)$ admits a trisection diagram that is obtained from $(S,\delta,\varepsilon)$ via the local modification at each curve of $\varepsilon$ shown in Figure~\ref{fig:LocalDiag}, and
		\item the 4--manifold $\Ss^*(M)$ admits a trisection diagram that is obtained from $(S,\delta,\varepsilon)$ via the local modification at each curve of $\varepsilon$ shown in Figure~\ref{fig:LocalDiag*}.
	\end{enumerate}
\end{reptheorem}

Note that the condition on $H_\varepsilon$ is equivalent to the condition that $(S,\delta,\varepsilon)$ be drawn as in Figure~\ref{fig:StdHeeg}.

\begin{figure}[h!]
	\centering
	\includegraphics[width=.35\textwidth]{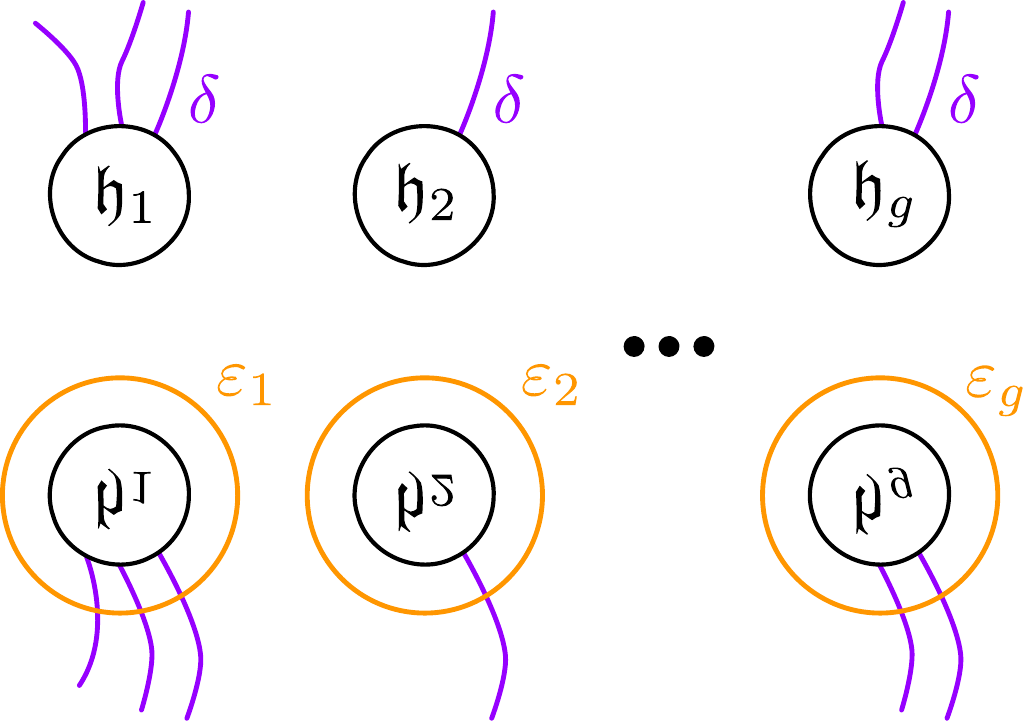}
	\caption{A suitable Heegaard diagram; the $\varepsilon$--curves bound obvious disks in the plane.}
	\label{fig:StdHeeg}
\end{figure}

\begin{proof}

We'll first discuss the the spin $\Ss(M)$, then modify the argument to address the twisted-spin $\Ss^*(M)$.

Let $M=H_\delta\cup_S H_\varepsilon$ be a genus $g$ Heegaard splitting for $M$.  We have the following decomposition:
$$\Ss(M) = (H_\delta\times S^1)\cup_Y(\Ss(H_\varepsilon)),$$
where $Y = S\times S^1$.  This decomposition is visible in Figure~\ref{fig:Morse2}(b), where $Y$ is the preimage of a circle separating the indefinite folds of index one from those of index two.  In the proof of Theorem \ref{thm:SpunTri} above, the Morse 2--function was modified on $\Ss(H_\varepsilon)$ in such a way that the central fiber became a genus $3g$ surface $\Sigma$.  Our first task is to identify $\Sigma$ inside $\Ss(H_\varepsilon)$.  Our approach will be to work from Figure~\ref{fig:Morse2}(b), beginning at the center, and ``trisect'' each subsequent index two fold.

The space $\Ss(H_\varepsilon)$ can be obtained from $S^2\times D^2$ by attaching $g$ round one-handles in the following manner.  We will parameterize $D^2$ by $(r,\theta)$ with $r\in[0,1]$ and $\theta\in S^1\subset \C$, and we will let $\vec r_\theta\subset D^2$ denote the unit-length segment at angle $\theta$.  For $i=1,2,\ldots, g$, let $D^+_i$ and $D^-_i$ be a pair of disjoint disks on $S^2$, and attach a three-dimensional one-handle $\frak h^\theta_i$ to $S^2\times\vec r_\theta$ along $D^\pm_i\times\{(1,\theta)\}$ for each $\theta\in S^1$.  (For each $i$, the union $\frak h_i = \bigcup_\theta\frak h^\theta_i$ is a four-dimensional round one-handle.)  Equivalently, we can view this handle attachment as the identification of $D^+\times\{(1,\theta)\}$ with $D^-\times\{(1,\theta)\}$ via a reflection (conjugation) map. We parameterize $D^\pm_i$ by $(s,\phi)$, where $s\in[0,1]$ and $\phi\in S^1\subset \C$, and we let 
$$\omega^\theta_i(s,\phi) = ((s,\phi)\times\vec r_\theta)\cup(\overline{(s,\phi)}\times\vec r_\theta).$$
In other words, the $\omega^\theta_i(s,\phi)$ are arcs that run over $\frak h^\theta_i$, connecting identified pairs of points in $D^\pm$ on $S^2\times\{(0,0)\}$.

Consider the arcs $\omega_i^\theta$ given by
$$\omega^\theta_i = \omega^\theta_i(1/2,\theta) = ((1/2,\theta)\times\vec r_\theta)\cup(\overline{(1/2,\theta)}\times\vec r_\theta).$$
In other words, $\omega^\theta_i$ is an arc running over $\frak h_i^\theta$ connecting the point with angle $\theta$ on the circle of radius $1/2$ on $D_i^+$ to the conjugate point on $D_i^-$.  Note that $\frak h_i^\theta$ can be regarded as a regular neighborhood of $\omega_i^\theta$, so $\Ss(H_\varepsilon)$ is a regular neighborhood of the two-complex
$$S^2\cup\left(\bigcup_{i=1}^g\bigcup_{\theta\in S^1}\omega_i^\theta\right).$$

Consider the three angle values $\theta_j = \frac{2\pi}{3}j$, for $j=0,1,2$, along with the $3g$ arcs $\omega_i^{\theta_j}$. Let $\Sigma$ be the surface obtained by surgering the central $S^2$ along these $3g$ arcs.  Note that $\Sigma$ has genus $3g$ and is contained in the interior of $\Ss(H_\varepsilon)$.  We now describe three compression bodies whose higher genus boundary component coincides with $\Sigma$ and whose lower genus boundary component is a fiber of $Y = S\times S^1$, hence has genus $g$.  Thus, we must describe $2g$ compression disks for each compression body.

Let $\frak h_i^j$ denote a small tubular neighborhood of $\omega_i^{\theta_j}$.  We can think of $\frak h_i^j$ as a small three-dimensional one-handle inside the larger three-dimensional one-handle $\frak h_i^{\theta_j}$, as in Figure~\ref{fig:Handles}.  Let $\Delta_{1,i}^j$ denote the cocore of $\frak h_i^j$.  Next, notice that $\Sigma\cap D_i^+$ is a thrice-punctured disk.  These punctures cut the circle of radius $1/2$ in $D_i^+$ into three arcs.  Call these arcs $a_i^j$, with the value $j$ determined by the property that $a_i^j\cap\frak h_i^j = \emptyset$.  See Figure~\ref{fig:Handles}(a).  Let $\Delta_{2,i}^j$ be the union of the arcs $\omega_i^\theta$ corresponding to the points in arc $a_i^j$.  Note that the $\Delta_{2,i}^j$ are compression disks for $\Sigma$.

\begin{figure}[h!]
	\centering
	\includegraphics[width=.8\textwidth]{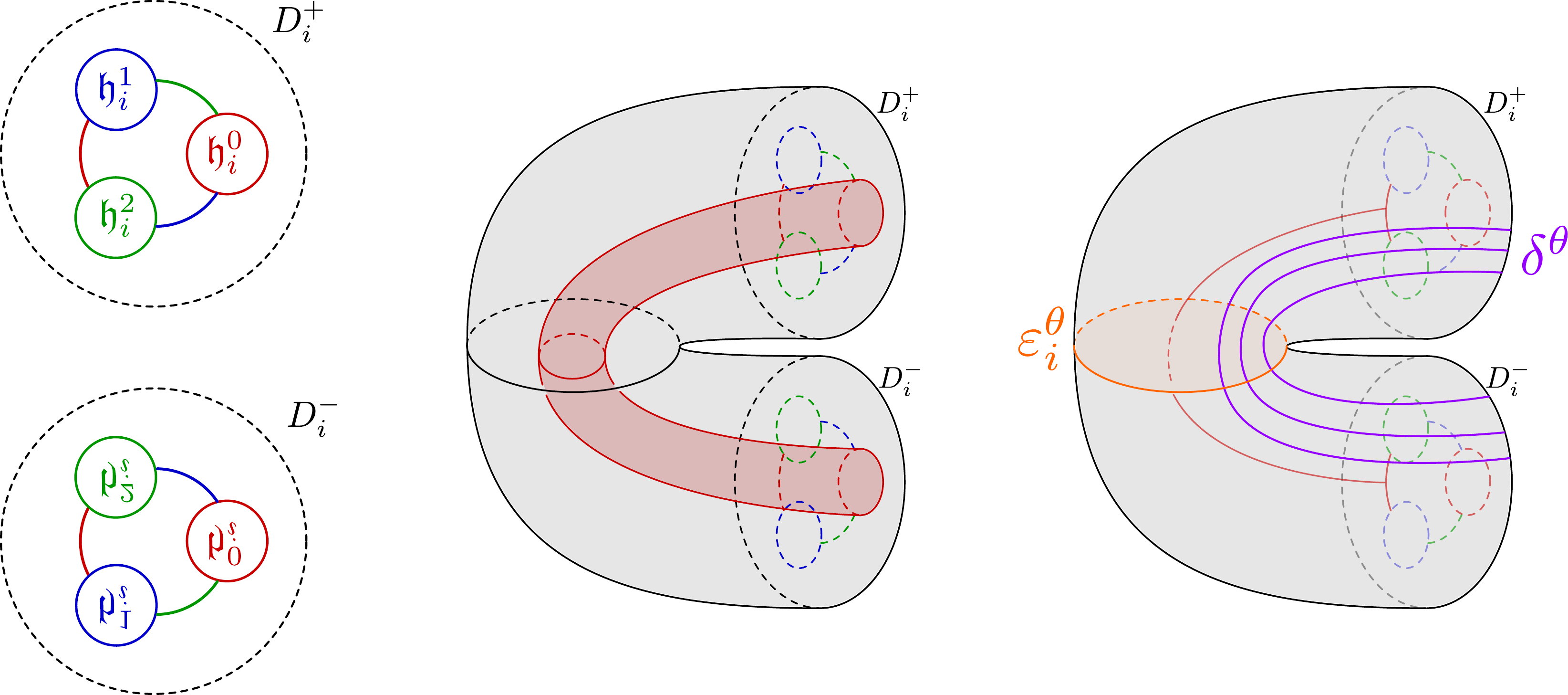}
	\caption{(a) The disk $D^\pm_i$ on the central sphere $S^2\times\{(0,0)\}$ describing the attaching region for $\frak h_i^\theta$. (b) The handle $\frak h^0_i$ inside the $\frak h^{\theta_0}_i$, and the portion of $H_\alpha$ bounded thereby. (c) The handle $\frak h^\theta_i$ for some $\theta\in(2\pi/3,4\pi/3)$.  In the interior, we have the arc $\omega^\theta_i$, which lies in the $\alpha$--disk $\Delta^0_{2,i}$. One the boundary, we have the curve $\varepsilon^\theta_i$ and potions of the curves from $\delta$, which serve to parameterize the genus $g$ surface $S\times\theta$ in $\partial (\Ss(H_\varepsilon)) = \partial (H_\delta\times S^1) = S\times S^1$.}
	\label{fig:Handles}
\end{figure}

Let $H^j$ denote the compression body defined by the disks $\{\Delta_{1,i}^j,\Delta_{2,i}^j\}_{i=1}^g$. Note that $\Sigma$ is contained in the union
$$S^2\cup\left(\bigcup_{i=1}^g\bigcup_{j=0}^2\frak h_i^{\theta_j}\right).$$  If we compression $\Sigma$ using, say, the disks $\Delta_{1,i}^0$, then the resulting surface can be made disjoint from the handles at angle 0.  Slightly differently, if we compress further using the disks $\Delta_{2,i}^0$, then $\Sigma$ can be isotoped to lie in any single angle, say $2\pi/3$.  It follows that the result of compressing $\Sigma$ along the disks of $\Delta_{1,i}^0$ and $\Delta_{2,i}^j$ is the surface $S\times\{2\pi/3\}$.  Repeating this, we see that the lower genus boundary component of $H^j$ can be assumed to be $S\times\{\theta_j+2\pi/3\}$, as desired.

Consider the complex $X=\Sigma\cup H^0\cup H^1\cup H^2$.  This complex is a three-dimensional neighborhood of the two-complex described above.  It follows that $\Ss(H_\varepsilon)$ is obtained by thickening $X$.

We complete the $H^j$ to handlebodies by attaching a copy of $H_\delta$ to the lower genus boundary component.  For example, we let $H_\alpha = H^0\cup(H_\delta\times\{2\pi/3\})$, and we obtain $H_\beta$ and $H_\gamma$ from $H^1$ and $H^2$ similarly.  We claim that $H_\alpha\cup H_\beta\cup H_\gamma$ is the spine of a trisection of $\Ss(M)$. A regular neighborhood of this spine is given by $\Ss(H_\varepsilon)$ plus thickening of the three $H_\delta$--fibers.  All that remains is to fill in the four dimensional spans between the $H_\delta$--fibers.  Each of these pieces is $H_\delta\times I$, which is a four-dimensional one-handlebody.  If follows that this spine defines a $(3g,g)$--trisection of $\Ss(M)$.

Finally, we will describe a trisection diagram corresponding to this spine by describing the curves $\alpha$ lying on $\Sigma$ that determine the handlebody $H_\alpha$.  The construction is symmetric in $\alpha$, $\beta$, and $\gamma$, so the description of the other curves will follow. Recall that we assumed that the Heegaard diagram $(S,\delta,\varepsilon)$ was standard, as in Figure~\ref{fig:StdHeeg}.  Figure~\ref{fig:Handles} shows how to take each hand $\frak h_i$ and create from it a triple of handles, $\frak h_i^j$, as in the construction of the trisection above. For each $i$, two $\alpha$ disks are obtained.  Let $\alpha_{g+i} = \partial \Delta_{1,i}^0$, and let $\alpha_{2g+i} = \partial\Delta_{2,i}^0$.  See Figure~\ref{fig:Local_alpha}. Compressing along these $2g$ disks gives the fiber $S\times\{2\pi/3\}$.

\begin{figure}[h!]
	\centering
	\includegraphics[width=.4\textwidth]{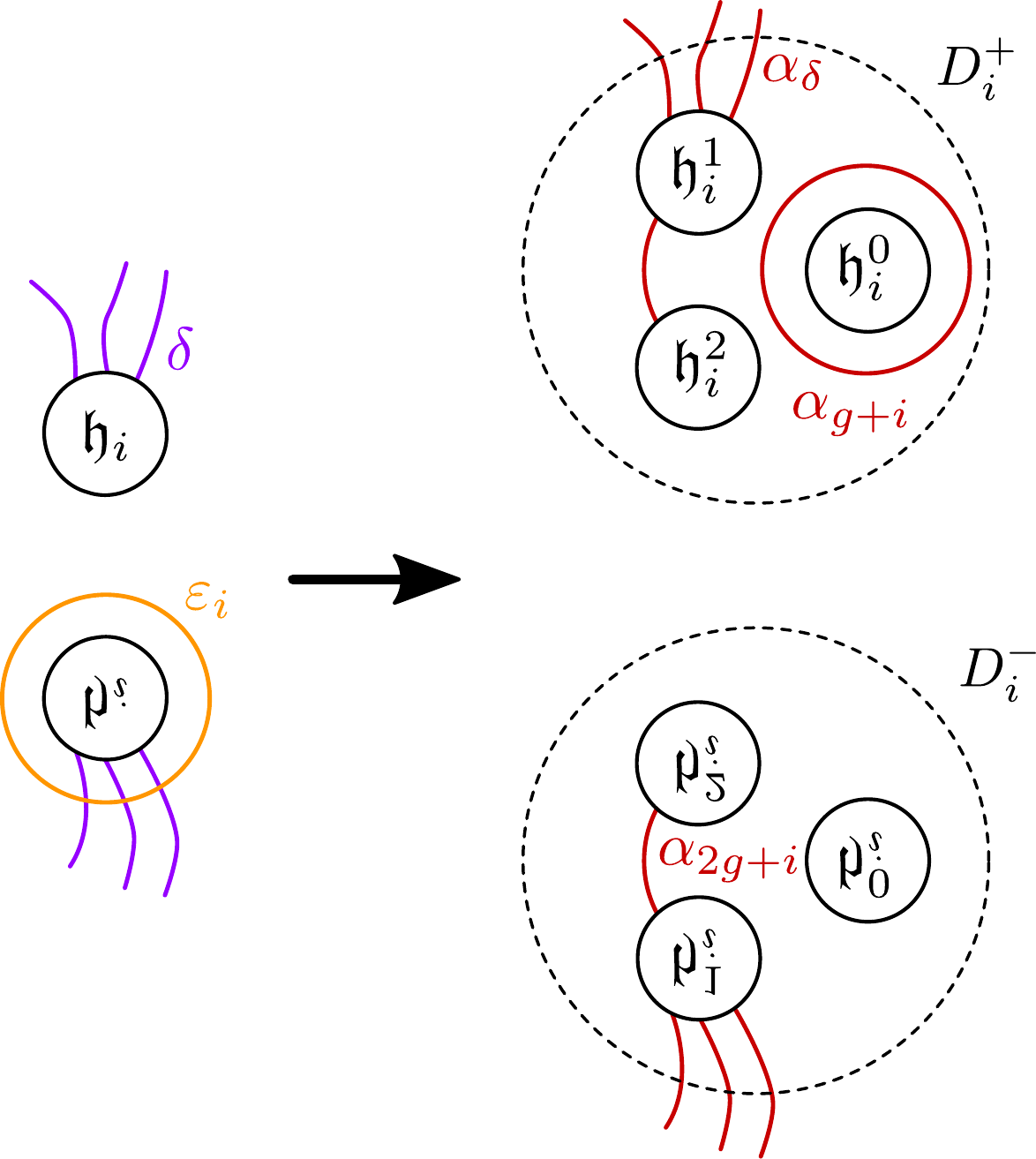}
	\caption{The local transition from a Heegaard diagram $(\delta,\varepsilon)$ to the $\alpha$--curves of the trisection diagram $(\alpha,\beta,\gamma)$. The $\beta$-- and $\gamma$--curves are obtained in a symmetric way.}
	\label{fig:Local_alpha}
\end{figure}

Figure~\ref{fig:Handles}(c) shows one $\theta$--slice of the round handle $\frak h_i$.  At each such $\theta$--slice, we see $\varepsilon_i^\theta$ bounding to the inside, while the curves of $\delta^\theta$ run over the handle as prescribed by the original diagram (Figure~\ref{fig:StdHeeg}).  Imagine $\theta=2\pi/3$ here, and recall that we think of $\frak h^{2\pi/3}_i$ as a neighborhood of $\omega_i^{2\pi/3}$ (the arc shown in Figure~\ref{fig:Handles}(c)).  The disks bounded by the curve $\delta$ in $H_\delta\times\{2\pi/3\}$ are almost the remaining $\alpha$--disks, but their boundary lies on the lower genus boundary component of the compression body $H^0$, not on $\Sigma$.  However, it is a simple matter to flow the boundaries of this disk up through the compression body (using the vertical structure) until they lie on $\Sigma$.

Thus, for $i=1,\ldots, g$, $\alpha_i$ will be determined by $\delta_i$ in the following way.  Outside of the $D_i^\pm$, $\alpha_i$ coincides with $\delta_i$.  Inside, the arcs run from $\partial D^\pm_i$ to the handle $\frak h_i^1$.  In fact, this choice is well-defined, thanks to the presence of the curves $\alpha_{g+i}$ and $\alpha_{2g+i}$, as in Figure~\ref{fig:Local_alpha}.  Let $\alpha_\delta = \{\alpha_1,\ldots,\alpha_g\}$.  

\begin{figure}[h!]
	\centering
	\includegraphics[width=\textwidth]{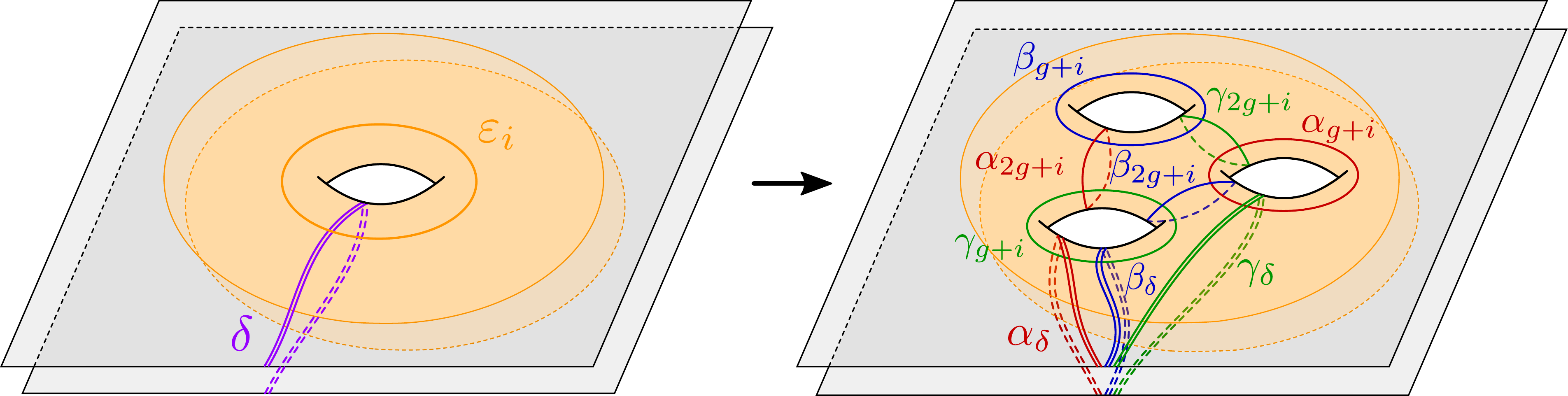}
	\caption{The local modification used to transform a Heegaard diagram $(\delta,\varepsilon)$ for a 3--manifold $M$ into a trisection diagram $(\alpha, \beta, \gamma)$ for the spun manifold $\Ss^*(M)$.}
	\label{fig:LocalDiag}
\end{figure}

The sum total of this local modification is shown in Figure~\ref{fig:LocalDiag}.  Note that the curves $\alpha_\delta$, $\beta_\delta$, and $\gamma_\delta$ coincide after compressions of the other types of curves.  This reflects the fact that these curves come from $H_\delta\times S^1$. This completes the proof of part (1).

To pass from the case of $\Ss(M)$ to that of $\Ss^*(M)$, we will perform a Gluck twist on the central $S^2$, cutting out a $S^2\times D^2$ neighborhood and re-gluing with a full twist.  Importantly, we assume that the twisting takes place in the $\theta$--interval $[0,2\pi/3]$.  Under this assumption, we see that $\Sigma$ is preserved after the Gluck twist, as are $H_\alpha$ and $H_\beta$. Further, the $\gamma_\delta$ and $\gamma_{g+i}$ are also preserved.  The only change occurs to the curves $\gamma_{2g+i}$; the Gluck twist is concentrated above the arc $a_i^2$.  The disks $\gamma_{2g+1}$ sitting above these arcs get twisted around the terminal locus of the arc. In terms of the diagram, this gluing amounts to performing a Dehn twist of the $\gamma_{2g+i}$ about the corresponding $\beta_{g+i}$.  Thus, Figure~\ref{fig:LocalDiag} changes to Figure~\ref{fig:LocalDiag*}.  This completes the proof of part (2).

\begin{figure}[h!]
	\centering
	\includegraphics[width=\textwidth]{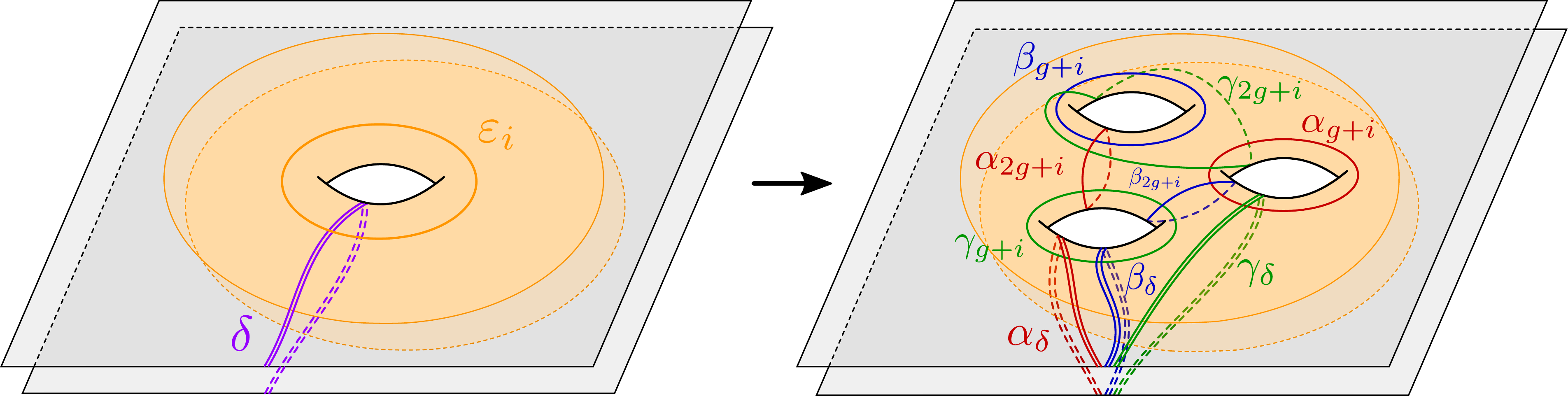}
	\caption{The local modification used to transform a Heegaard diagram $(\delta,\varepsilon)$ for a 3--manifold $M$ into a trisection diagram $(\alpha, \beta, \gamma)$ for the twist-spun manifold $\Ss^*(M)$.}
	\label{fig:LocalDiag*}
\end{figure}

\end{proof}

Note that within the above proof, we have also given a second proof of Theorem~\ref{thm:SpunTri} that is independent of the original Morse 2--function proof.

\subsection{Doubly-pointed diagrams}\ 

Let $M$ be a closed, connected, orientable 3--manifold, and let $K$ be a knot in $M$.  Let $M=H_1\cup_S H_2$ be a Heegaard splitting for $M$.  Assume that $S$ has large enough genus (stabilizing if necessary) so that $K$ can be put in 1--bridge position with respect to $S$.  This means that $\upsilon_i=K\cap H_i$ is a properly embedded, boundary-parallel arc for $i=1,2$.  Let $\{z,w\} = K\cap S$, and assume that $\upsilon_1$ is contained in the zero-handle $\frak h_0$, while $\upsilon_2$ is contained in the three-handle $\frak h_3$.

\begin{reptheorem}{thm:dpdiags}
	Let $(S,\delta,\varepsilon)$ be a genus $g$ Heegaard diagram for a closed 3--manifold $M$ with the property that $H_\varepsilon$ is standardly embedded in $S^3$.  Let $K$ be a knot in $M$ such that $(S,\delta,\varepsilon,z,w)$ is a doubly-pointed Heegaard diagram for the pair $(M,K)$. Then the pairs $\Ss^n(M,K)$  admit doubly-pointed trisection diagrams that are obtained from $(S,\delta,\varepsilon,z,w)$ via a local modification at each curve of~$\varepsilon$.
\end{reptheorem}

\begin{proof}
	By the last part of the proof of Theorem~\ref{thm:diags}, it is clear that gluing using $\tau^n$ corresponds to Dehn twisting $\gamma_{2g+i}$ $n$ times about $\beta_{g+i}$.  Thus, the underlying trisection diagram $(\Sigma,\alpha,\beta,\gamma)$ results from the same local modification as in Figure~\ref{fig:LocalDiag*}, except with the added Dehn twists.
	
	It remains to show that $\Ss^n(K)$ is in 1--bridge position with respect to this trisection, so we verify that $\Ss^n(K)$ intersects the three handlebodies in boundary parallel arcs and intersects the four-dimensional pieces in boundary parallel disks.
	
	The sphere $\Ss^n(K)$ can be decomposed as $$D^2\times\{N\}\cup (\upsilon_1\times S^1)\cup D^2\times\{S\}.$$  We now consider how the various parts of this decomposition intersect the trisection of $\Ss^n(M)$.
	
	Consider $\upsilon_1\times S^1\subset H_\delta\times S^1$.  This annulus intersects each of fibers in an arc.  For example, $\upsilon_1\times\{2\pi/3\}$ is an arc in $H_\delta\times\{2\pi/3\}$ with endpoints in the lower genus boundary component, $S\times \{2\pi/3\}$, of the compression body $H^0$.  The endpoints of this arc are $\{z,w\}\times\{2\pi/3\}$. Since $\upsilon_1$ is boundary parallel (in $H_\delta\subset M$) into $S$, we have that $\upsilon_1\times\{2\pi/3\}$ is boundary parallel (in $H_\delta\times\{2\pi/3\}\subset H_\alpha$) into $S\times\{2\pi/3\}$ and that the disk $\upsilon_1\times[0,2\pi/3]$ is boundary parallel (in $H_\delta\times[0,2\pi/3]$) into $S\times[0,2\pi/3]$.
	
	Let us focus now on $\upsilon_\alpha = \Ss^n(K)\cap H_\alpha$, recalling that $H_\alpha = H_\delta\times\{2\pi/3\}\cup_{S\times\{2\pi/3\}} H^0$. We have already seen that $\Ss^n(K)\cap (H_\delta\times\{2\pi/3\}) = \upsilon_1\times\{2\pi/3\}$ is boundary parallel into $S\times\{2\pi/3\}$.  Next, we note that $\Ss^n(K)\cap H^0$ is simply two arcs.  One arc runs from $\{z\}\times\{2\pi/3\}$ to the north pole $N$ of the sphere $S^2\times\{0\}$ that was the core of the original filling in the twist-spinning operation.  Of course, this sphere was stabilized to produce the trisection surface $\Sigma$, but these modification were performed away from the poles.  Thus, this arc is vertical in the compression body $H^0$.  Similarly, the second arc is vertical and connects $\{w\}\times\{2\pi/3\}$ to the south pole $S$ of $\Sigma$.  Since $\Sigma$ and $S\times\{2\pi/3\}$ cobound the compression body $H^0$ and $\upsilon_\alpha$ is a flat arc in the lower genus side together with two vertical arcs, it follows that $\upsilon_\alpha$ can be isotoped to lie in $\Sigma$, as desired.  The same goes for the arcs $\upsilon_\beta$ and $\upsilon_\gamma$.
	
	Next, let us focus on the 4--dimensional region $X_3$ between $H_\alpha$ and $H_\gamma$.  Recall that $H_\gamma = H^2\cup H_\delta\times\{0\}$, so we can write
	$$X_3 = \left(H_\delta\times[0,2\pi/3]\right)\cup_{S\times[0,2\pi/3]}\left((H^0\cup_\Sigma H^2)\times I\right).$$
	The second piece of the union comes from the fact that $\Ss^n(H_\varepsilon)$ was seen to be a thickening of the complex $\Sigma\cup H^0\cup H^1\cup H^2$. Now, we note that $\Dd_3=\Ss^n(K)\cap X_3$ is simply the disk $\upsilon_1\times[0,2\pi/3]$, which we have already observed is boundary parallel into $S\times[0,2\pi/3]$, together with some vertical pieces in the thickening $(H^0\cup_\Sigma H^2)\times I$.
	
	Since $\partial \Dd_3 = \upsilon_\alpha\cup_{\{N,S\}}\upsilon_\gamma$, once we have pushed most of $\Dd_3$ into $S\times[0,2\pi/3]$, we can use the product structure of $(H^0\cup_\Sigma H^2)\times I$ and the boundarly parallelism of $\upsilon_\alpha$ and $\upsilon_\gamma$ to push $\Dd_3$ into $H^0\cup_\Sigma H^2\subset H_\alpha\cup_\Sigma H_\gamma$, as desired.  The same goes for the other 4--dimensional pieces $(X_2,\Dd_2)$ and $(X_1,\Dd_1)$.
	
	Thus, $\Ss^n(K)$ is in 1--bridge position with respect to the trisection described in the proof of Theorem~\ref{thm:diags}.  Note that the local modification require here is slightly different:  We must twist the $\gamma_{2g+i}$ around the $\beta_{g+i}$ a total of $n$ times.  However, once we have done that, we have a doubly-pointed diagram for $\Ss^n(M,K)$; since the double-point $\{z,w\}$ is distant from the $\varepsilon_i$, it is not affected by the modification, and it becomes the doubly-point $\{N,S\}$ for the doubly-pointed trisection diagram.  This completes the proof.  (In order to see that $\{N,S\} = \{z,w\}$ in the appropriate manner, we simply treat the original surface $S$ as the boundary of the result of attaching handles to $S^2\times\{0\}$ in the standard way.  In other words, if we think of the original double-point $\{z,w\}$ as the ``poles'' of $S$, the the new double-point $\{z,w\} = \{N,S\}$ for $\Sigma$ is simply the ``poles'' of $\Sigma$ coming from the poles of $S^1\times\{0\}$.)
	
%
%
%
%
\end{proof}

\section{Corollaries, Examples, and Questions}\label{sec:exs}

Let us return to the question of classifying manifolds with low trisection genus. The following facts are easy to verify.
\begin{enumerate}
	\item The only manifold with trisection genus zero is $S^4$.
	\item The only manifolds with trisection genus one are $\CP^2$, $\overline{\CP^2}$, and $S^1\times S^3$.
\end{enumerate}
Moreover, $S^2\times S^2$ is the only irreducible four-manifold with trisection genus two. We also have the following.

\begin{proposition}\label{prop:facts}
	Suppose $X$ admits a $(g,k)$--trisection.  Then,
	\begin{enumerate}
		\item $\chi(X) = 2+g-3k$.
		\item $\pi_1(X)$ has a presentation with $k$ generators.
		\item $|H_1(X;\Q)|\leq k$ and $|H_2(X;\Q)|\leq g-k$.
	\end{enumerate}
\end{proposition}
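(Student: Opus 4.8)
The plan is to derive all three statements from the combinatorial structure of a $(g,k)$--trisection, using the handle decomposition that a trisection induces on $X$.

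\medskip

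First I would set up the handle decomposition. A $(g,k)$--trisection $X = X_1 \cup X_2 \cup X_3$ gives a standard handle structure on $X$: build $X_1 = \natural^k(S^1\times B^3)$ from one $0$--handle and $k$ $1$--handles; then $X_1 \cup X_2$ is a regular neighborhood of a spine which, after attaching along the $\beta$--curves, contributes $g$ $2$--handles (one for each curve of $\beta$, read on the Heegaard surface $\Sigma$); dually, the piece $X_3$ and its gluing contribute $g$ $3$--handles and one $4$--handle. (This is the standard ``trisection handle decomposition'' explained in \cite{Gay-Kirby_Trisecting_2016}.) Thus $X$ has a handle decomposition with exactly one $0$--handle, $k$ $1$--handles, $g$ $2$--handles, $g$ $3$--handles, $k$ $4$--handles... wait, that is not right; the correct count is $1$, $k$, $g$, $(g-k)$ adjustments --- more precisely one obtains $1$ $0$--handle, $k$ $1$--handles, $g$ $2$--handles, $k$ $3$--handles, $1$ $4$--handle after cancelling, but for the Euler characteristic computation one can simply use the symmetric count $c_0 = c_4 = 1$, $c_1 = c_3 = k$, $c_2 = g$.

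\medskip

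With that in hand, statement (1) is immediate: $\chi(X) = c_0 - c_1 + c_2 - c_3 + c_4 = 1 - k + g - k + 1 = 2 + g - 3k$. For statement (2), note that $\pi_1(X)$ is carried by the $2$--skeleton, which has one $0$--cell, $k$ $1$--cells, and $g$ $2$--cells; hence $\pi_1(X)$ admits a presentation with $k$ generators (and $g$ relators), which in particular bounds the number of generators by $k$. For statement (3), I would run the cellular chain complex over $\Q$: with $c_i$ as above, the rank of $H_1(X;\Q)$ is at most $c_1 = k$, giving $|H_1(X;\Q)| \le k$ (interpreting $|{-}|$ as first Betti number); and by Poincaré duality $b_1 = b_3$, while $\chi(X) = 2 - 2b_1 + b_2$ forces $b_2 = \chi(X) - 2 + 2b_1 = (g - 3k) + 2k = g - k$ when $b_1 = k$, and in general $b_2 \le g - k$ since each $2$--cycle is represented among the $g$ $2$--cells and $b_1 + b_3 = 2b_1 \le 2k$ subtracts off at least the contribution forced by $H_1$. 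One should phrase this last point carefully: from $b_2 = \chi - 2 + 2b_1$ and $b_1 \le k$ one gets $b_2 = (g - 3k) + 2b_1 \le g - 3k + 2k = g - k$.

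\medskip

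The main obstacle, and the part requiring the most care, is pinning down the exact handle decomposition --- in particular being precise about why the trisection yields one $0$--handle, $k$ $1$--handles, $g$ $2$--handles, and (dually) $k$ $3$--handles and one $4$--handle, rather than some other count. The cleanest route is to cite the description in \cite{Gay-Kirby_Trisecting_2016} (or \cite{Meier-Schirmer-Zupan_Classification_2016}) that $X_1 \cup X_2$ deformation retracts to a wedge of $k$ circles with $g$ disks attached, and that the remaining piece $X_3$ contributes the dual handles; everything else is bookkeeping in the cellular chain complex together with Poincaré duality. No genuinely new geometry is needed beyond what is already recalled in Section~\ref{sec:back}.
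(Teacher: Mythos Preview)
Your overall strategy is exactly the paper's: reduce everything to the handle decomposition induced by a $(g,k)$--trisection and read off (1)--(3) from the cell counts. However, there is a genuine error in your handle count which is then hidden by an arithmetic slip.

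The correct count (as in \cite{Gay-Kirby_Trisecting_2016,Meier-Schirmer-Zupan_Classification_2016}, and as the paper's one-line proof states) is one $0$--handle, $k$ $1$--handles, $g-k$ $2$--handles, $k$ $3$--handles, and one $4$--handle. You settle instead on $c_2=g$, and then write $1-k+g-k+1=2+g-3k$; but $1-k+g-k+1=2+g-2k$, so your computation of $\chi$ is off by $k$. With $c_2=g-k$ the Euler characteristic genuinely comes out to $1-k+(g-k)-k+1=2+g-3k$. The reason only $g-k$ $2$--handles appear is that $X_1\cup X_2$ is obtained from $X_1\cong\natural^k(S^1\times B^3)$ by attaching $2$--handles until the boundary becomes $\partial X_3\cong\#^k(S^1\times S^2)$ again; after handleslides, $k$ of the $g$ curves become standard and the remaining $g-k$ give the $2$--handles.

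Once the handle count is corrected, your arguments for (2) and (3) go through. Part (2) only uses $c_1=k$. Your Poincar\'e-duality argument for (3) is fine and in fact a bit more informative than simply bounding $b_2$ by $c_2$: from $\chi=2-2b_1+b_2$ and $b_1\le k$ you get $b_2=g-3k+2b_1\le g-k$. Note that with the corrected count one can also argue (3) directly, since $b_2\le c_2=g-k$.
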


\begin{proof}
	Such an $X$ admits a handle decomposition with a single 0--handle, $k$ 1--handles, $g-k$ 2--handles, $k$ 3--handles, and a single 4--handle~\cite{Gay-Kirby_Trisecting_2016,Meier-Schirmer-Zupan_Classification_2016}.
\end{proof}

We can now prove Corollary~\ref{coro:Minimal}.  Note that $(g,k)$--trisections are standard if $k\geq g-1$~\cite{Meier-Schirmer-Zupan_Classification_2016}.

\begin{repcorollary}{coro:Minimal}\ 
	For every integer $g\geq 3$ and every $1\leq k\leq g-2$, there exist infinitely many distinct 4--manifolds admitting minimal $(g,k)$--trisections.  
\end{repcorollary} 

\begin{proof}
	Let $k\geq 1$, and let $M$ be a three-manifold with Heegaard genus $g(M) = k$ and $rk(\pi_1(M))=k$.  Let $X = \Ss(M)$.  By Theorem~\ref{thm:SpunTri}, $X$ admits a $(3k,k)$--trisection.  By Proposition~\ref{prop:facts}(2), since $\pi_1(X) = \pi_1(M)$, $X$ cannot admit a $(g',k')$--trisection with $k'<k$.  By Proposition~\ref{prop:facts}(1), $X$ cannot admit a $(g',k)$--trisection with $g'<g$.

	Now, let $X_n = X\#(\#^n\CP^2)$, which admits a $(3k+n,k)$--trisection.  By similar reasoning, the second parameter, $k$,  cannot be decreased, nor can the first parameter, $g=3k+n$. To complete the proof, we let $M$ be a connected sum of $k$ lens spaces, so $M$ satisfies the necessary hypotheses of $g(M) = rk(\pi_1(M))=k$.
\end{proof}

Conspicuously absent from this result is the case of $k=0$.

\begin{question}
	For some $g\geq 3$, are there infinitely many four-manifolds admitting (minimal) $(g,0)$--trisections?
\end{question}

Since the classification of four-manifolds with trisection genus three remains open, we next turn our attention to the case of spun lens spaces.

\subsection{Spinning lens spaces}\ 

Figure~\ref{fig:L52} shows how to obtain a trisection diagram for $\Ss_5$.  The process is general.  Start with the genus one Heegaard diagram $(\delta,\varepsilon)$ for $L(p,q)$ where $\varepsilon$ is drawn as the boundary of the disk filling the center hole, and the curve $\delta$ is a $(p,q)$--curve. After performing the local modification, we see the characteristic 6--tuple of curves in the center, encircled by three copies of something similar to a $(p,q)$--curve.  In fact, these three more complicated outer curves will become $(p,q)$--curves (and will coincide) after the compression of any pair of same colored curve in the center.  Let $\Tt(p,q)$ denote the trisection obtained in this way.


By Corollary~\ref{coro:lens}, we know that $\Ss(L(p,q))$ and $\Ss^*(L(p,q))$ are diffeomorphic to $\Ss_p$, independent of $q$ and $q'$. This raises the following question.

\begin{question}
	Are $\Tt(p,q)$ and $\Tt(p,q')$ diffeomorphic as trisections for distinct values of $q$?
\end{question}

\begin{figure}[h!]
	\centering
	\includegraphics[width=.75\textwidth]{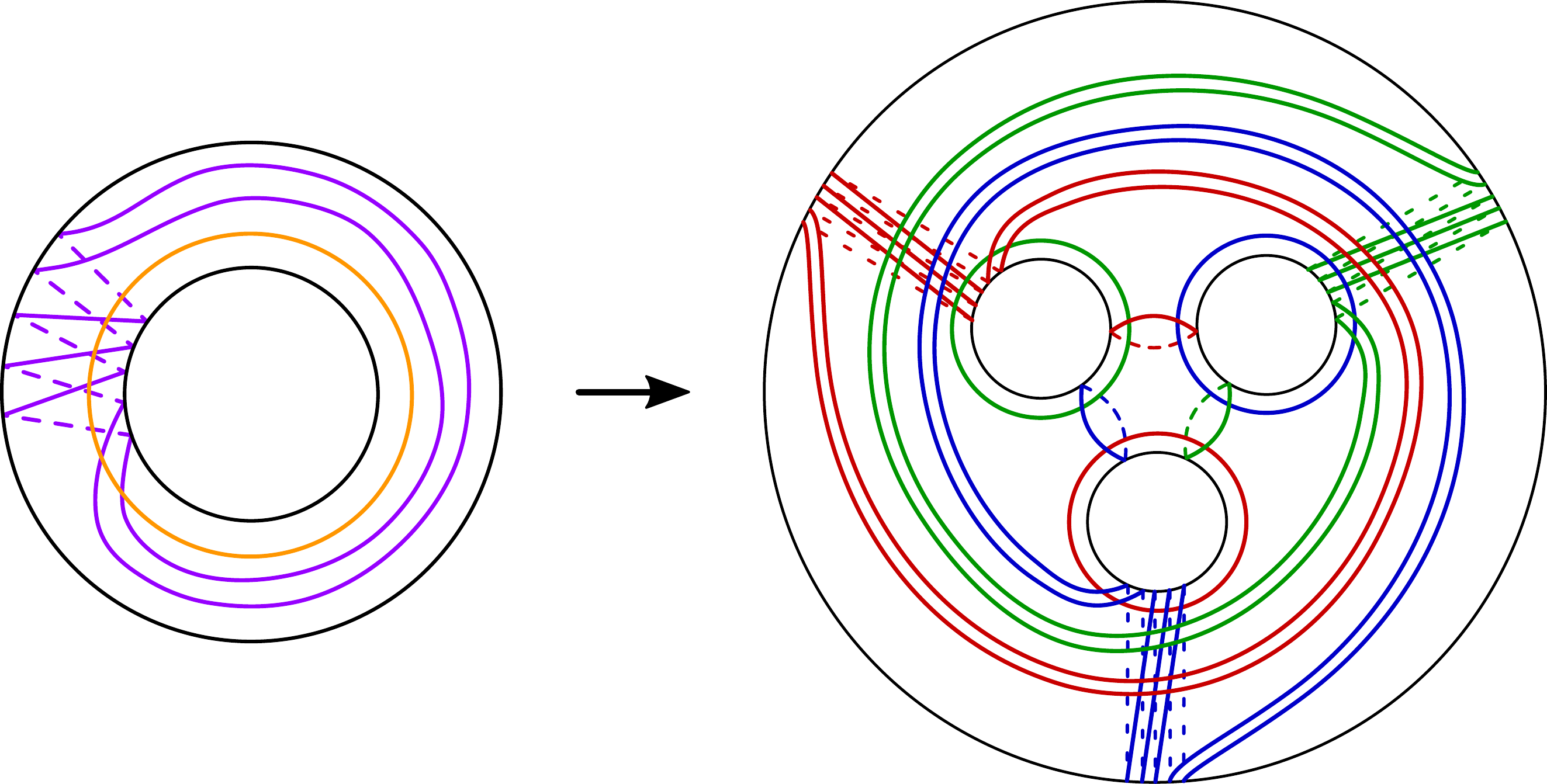}
	\caption{A genus one Heegaard diagram for the lens space $L(5,2)$ is transformed into a genus three trisection diagram for the spun lens space $\Ss_5\cong\Ss(L(5,2))$.}
	\label{fig:L52}
\end{figure}

For completeness, we describe how to obtain diagrams for the $\Ss'_p$.  Although, these diagrams depend on understanding the Gluck twist and surgery operations from a trisection diagram perspective, the details of which are the subject of work-in-progress with David Gay~\cite{Gay-Meier_Trisections_}.  The relevant sequence of diagrams is shown in Figure~\ref{fig:Surgery}.  Begin with a diagram for $\Ss_p$. (In this example, $p=4$ and the diagram comes from $\Ss(L(4,1))$.) We place points in the two central hexagons (one on the top of the surface and one on the bottom).  Colored arcs are given to show that the points can be connected in the complement of curves of each color. The fact that the arcs can be slide to coincide (paying attention to the relevant color) ensures that this is a doubly-pointed Heegaard triple. Let $\Kk$ denote the 2--knot in $\Ss_p$ encoded thusly.  We surger the surface along the dots, and extend the colored arcs to curves across the new annulus.  The resulting diagram describes the result of surgery on $\Kk$.  An easy exercise shows that this diagram destabilizes to give the genus one diagram for $S^1\times S^3$. (This proves that we identified the correct 2--knot.)  Finally, the third diagram describes the result of performing a Gluck twist on $\Kk$ in $\Ss_p$, which, by definition, gives $\Ss_p'$.  Details justifying these diagrammatic changes will appear in~\cite{Gay-Meier_Trisections_}.

\begin{remark}
	The right diagram in Figure~\ref{fig:Surgery} is obtained from the left one by a Dehn twist of one $\gamma$--curve about a $\beta$--curve.  If we had twisted the other $\gamma$--curve about the other $\beta$--curve, we would have a diagram for $\Ss^*(L(p,q))$, as described by Theorem~\ref{thm:diags}.
\end{remark}

\begin{figure}[h!]
	\centering
	\includegraphics[width=.9\textwidth]{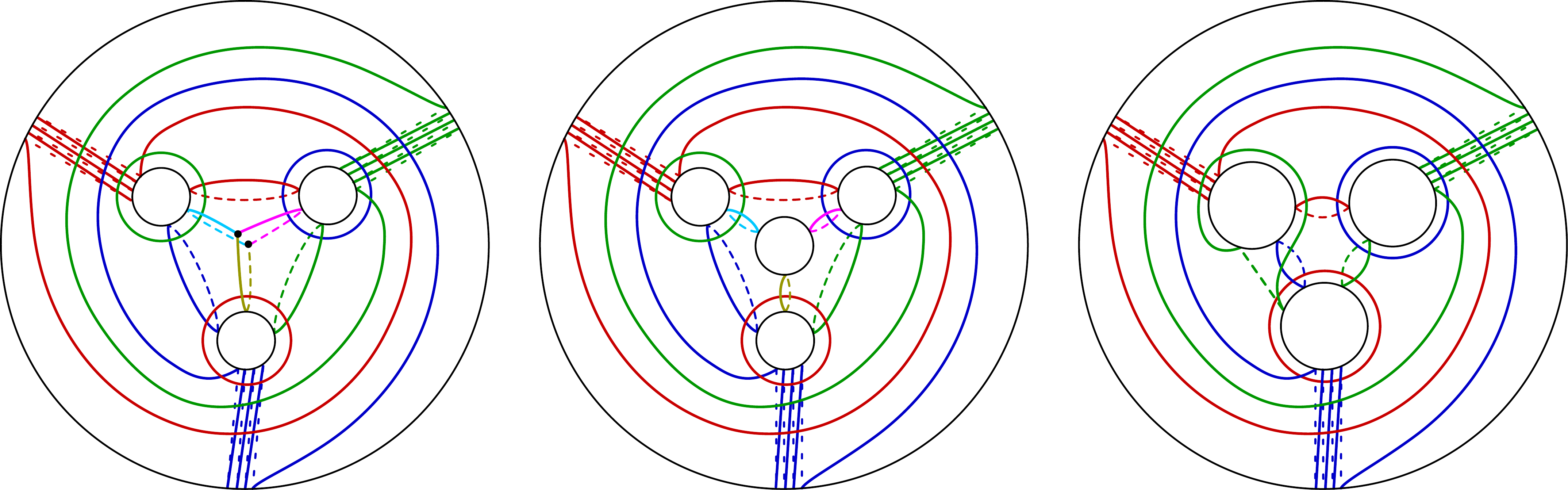}
	\caption{(Left) A doubly-pointed trisection diagram encoding the relevant 2--knot in $\Ss_p$. (Middle) The trisection diagram corresponding to the result of performing surgery on this 2--knot in $\Ss_p$.  An easy exercise shows that this diagram destabilizes to give the standard diagram for $S^1\times S^3$.  (Right) The diagram corresponding to the result of performing a Gluck twist on this 2--knot in $\Ss_p$; i.e., the sibling manifold $\Ss_p'$. (Here, $p=4$.)}
	\label{fig:Surgery}
\end{figure}

Baykur and Saeki have independently identified the manifolds in $\Pp$ as admitting genus three trisections~\cite{Baykur-Saeki_Simplifying_2017}.  In fact, they show they admit special trisections that they call \emph{simplified}.  The proof of Theorem~\ref{thm:SpunTri} gives a different type of ``simplified'' trisection for these spaces.  This leads to the the following questions.

\begin{questions}\ 
	\begin{enumerate}
		\item If $X$ admits a simplified genus three trisection (in either sense), is $X\in\Pp$?
		\item If $X$ admits a genus three trisection, does $X$ admit a simplified genus three trisection?
	\end{enumerate}
\end{questions}

\subsection{Spinning homology spheres}\ 

Let $\Sigma(p,q,r)$ denote the homology sphere that is a Seifert fibered space over the base orbifold $S^2(p,q,r)$.  Such spaces are known as \emph{Brieskhorn spheres}.  When $pq+qr+rp = \pm 1$, we can consider $\Sigma(p,q,r)$ as the branched double cover of $S^3$ along the pretzel knot $P(p,q,r)$.  In this case, it is particularly easy to give a genus two Heegaard splitting for $\Sigma(p,q,r)$ via the 3--bridge splitting of $P(p,q,r)$.  Such a diagram is shown on the left in Figure~\ref{fig:PeHS} in the case of $\Sigma(-2,3,5)$, which is the Poincar\'e homology sphere.

Figure~\ref{fig:PeHS} shows how to obtain a trisection diagram for $\Ss(\Sigma(p,q,r))$ when $pq+qr+rp=\pm 1$.  As far as we know, these are the simplest possible trisection diagrams for homology four-spheres.

\begin{figure}[h!]
	\centering
	\includegraphics[width=\textwidth]{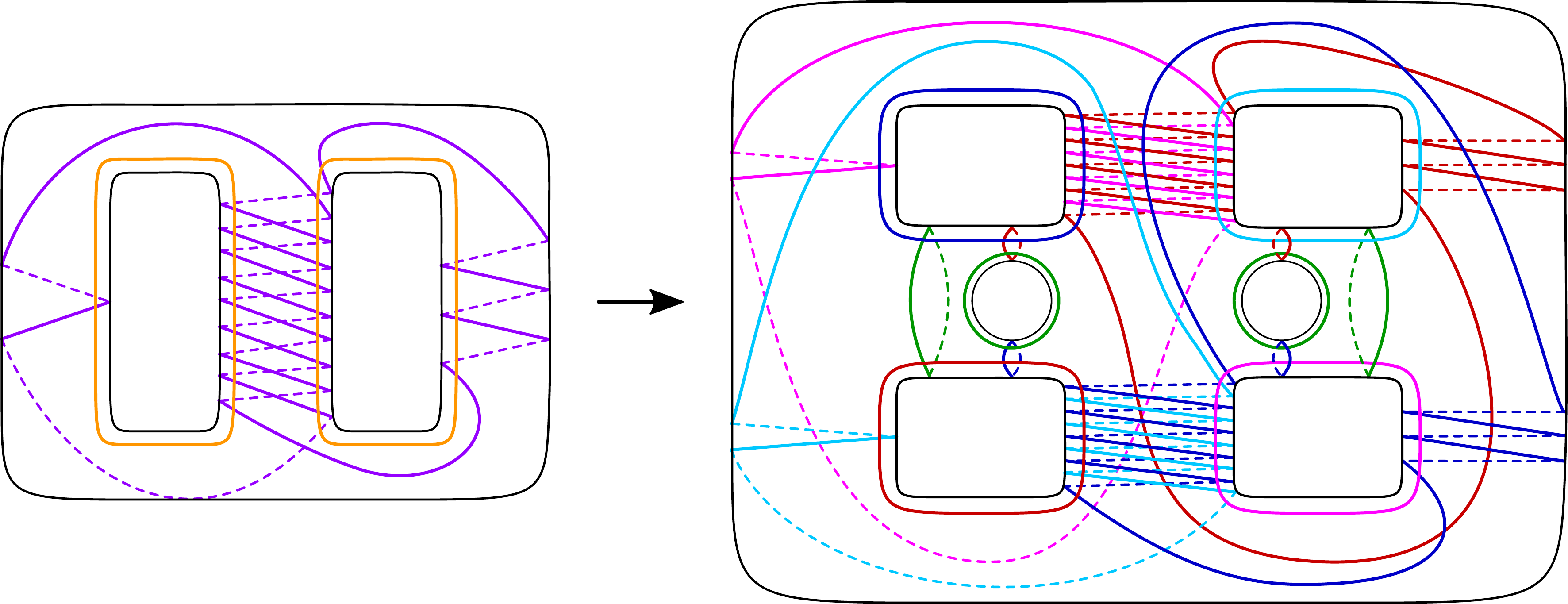}
	\caption{(Left) A Heegaard splitting for the Poincar\'e homology sphere $\Sigma(-2,3,5)$.  (Right) A trisection diagram for $\Ss(\Sigma(-2,3,5))$.  Note that two of the $\gamma$--curves (green) are not shown, but can be taken to be the same as the two complicated $\alpha$--curves (red/pink).}
	\label{fig:PeHS}
\end{figure}

\subsection{Spinning manifold pairs}\ 

We conclude by presenting two diagrams of spun pairs, one coming from a knot in $S^3$ and the other coming from a knot in a lens space. First, consider the doubly-pointed diagram for the torus knot $T(3,4)$ shown on the left in Figure~\ref{fig:TorusKnot}. One interesting property about torus knots is that the bridge number of $T(p,q)$ is equal to $\min(p,q)$.  This was used in~\cite{Meier-Zupan_Bridge_2015} to show that the spins $\Ss(T(p,q))$ have bridge number $3\min(p,q)+1$.  On the other hand, every torus knot can be isotoped to lie on the genus one Heegaard splitting of $S^3$, and, therefore, $T(p,q)$ admits a doubly-pointed genus one Heegaard diagram.  It follows, as is shown on the right side of Figure~\ref{fig:TorusKnot}, that $\Ss(T(p,q))$ admits a doubly-pointed genus three trisection diagram.

\begin{figure}[h!]
	\centering
	\includegraphics[width=.8\textwidth]{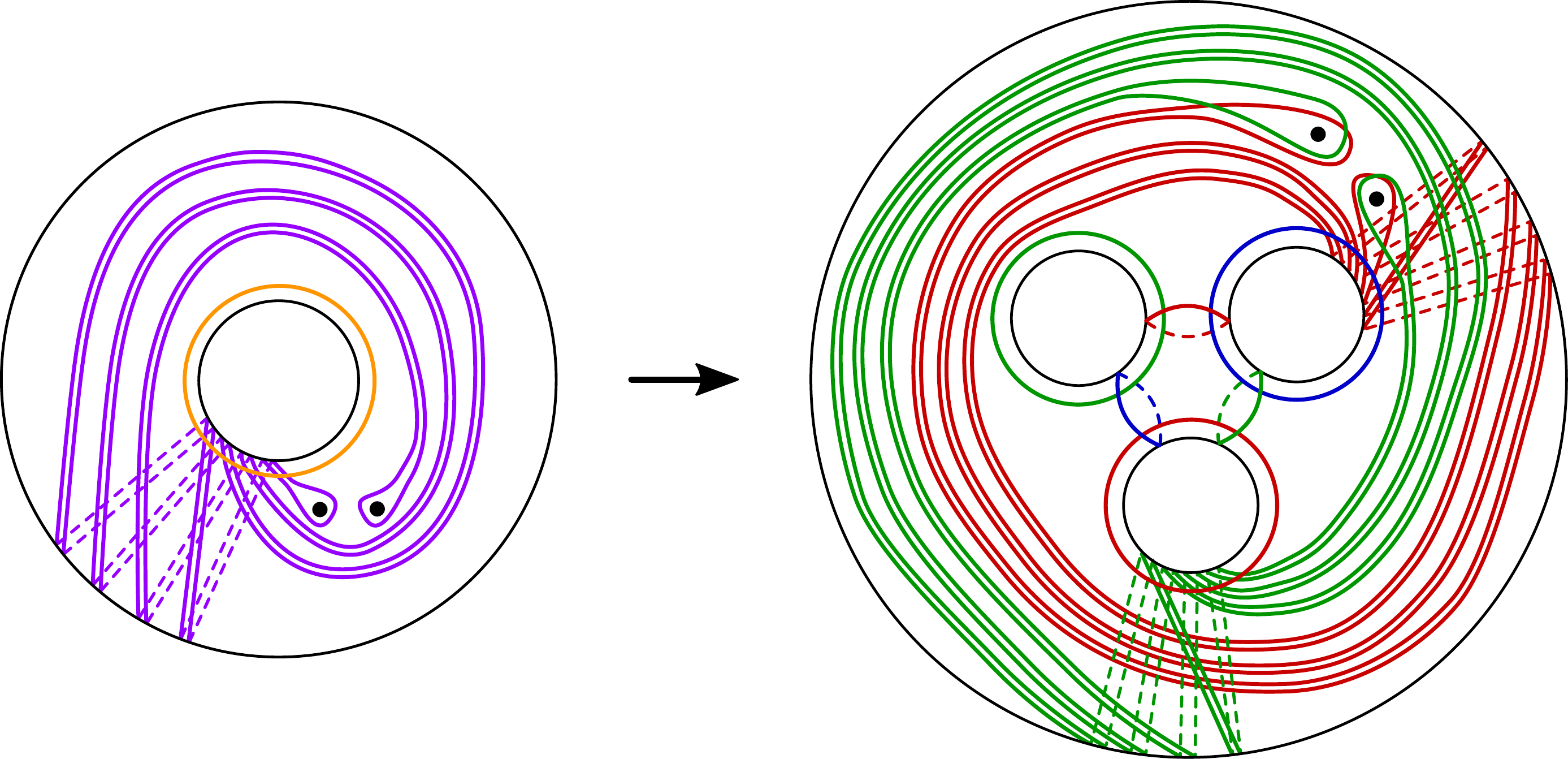}
	\caption{(Left) A doubly-pointed Heegaard splitting for the torus knot $T=T(3,4)$.  (Right) A doubly-pointed trisection diagram for the pair $\Ss(S^3,T)$.  Note that the third $\beta$--curve (blue) is not shown, but can be assumed to coincide with the complicated $\gamma$-curve (green).}
	\label{fig:TorusKnot}
\end{figure}

Next, let $Y = L(7,3)$, and let $K$ be the knot described by the doubly-pointed Heegaard diagram on the left side of Figure~\ref{fig:SimpleKnot}.  The knot $K$ is an example of a knot in $Y$ that has a surgery to $S^3$. (See~\cite{Hedden_On-Floer_2011} for an overview of these so-called \emph{simple knots}.) Figure~\ref{fig:SimpleKnot} shows the corresponding doubly-pointed trisection diagram for $\Ss(Y,K)$.

\begin{figure}[h!]
	\centering
	\includegraphics[width=.8\textwidth]{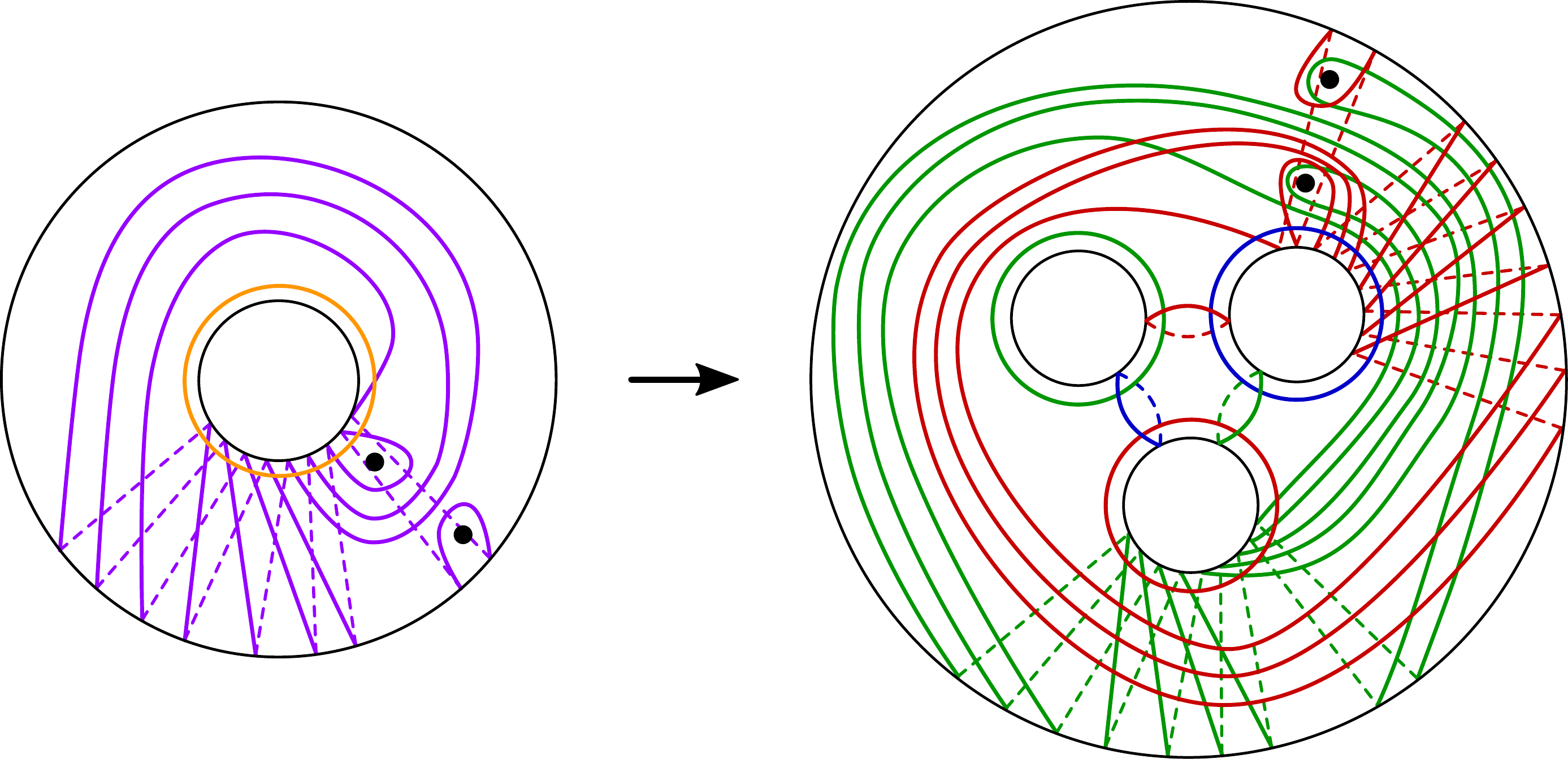}
	\caption{(Left) A doubly-pointed Heegaard splitting for a simple knot $K$ in $L(7,3)$.  (Right) A doubly-pointed trisection diagram for the pair $\Ss(L(7,3),K)$.  Note that the third $\beta$--curve (blue) is not shown, but can be assumed to coincide with the complicated $\gamma$-curve (green).}
	\label{fig:SimpleKnot}
\end{figure}


\bibliographystyle{acm}
\bibliography{MasterBibliography_2017_08}

\end{document}